\newtheorem{theorem}{Theorem}[section]
\newtheorem{lemma}[theorem]{Lemma}
\newtheorem{proposition}[theorem]{Proposition}
\numberwithin{equation}{section}
\def\im{\mathrm{i}}
\def\d{\mathrm{d}}
\def\id{\,\mathrm{d}}
\def\e{\mathrm{e}}
\def\Re{\operatorname{Re}}
\def\Im{\operatorname{Im}}
\def\si{\operatorname{si}}
\def\ci{\operatorname{ci}}
\def\ti{\operatorname{ti}}
\def\Ci{\operatorname{Ci}}
\def\f{\operatorname{f}}
\def\g{\operatorname{g}}
\definecolor{customgreen}{rgb}{0.0, 0.5, 0.0}
\author[G. Nemes]{Gerg\H{o} Nemes}
\address{School of Mathematics, Harbin Institute of Technology, Xidazhi Street, Harbin 150001,\newline Heilongjiang, People's Republic of China\medskip}
\email{g.nemes@hit.edu.cn}
\keywords{asymptotic expansions, generalised trigonometric integral, generalised sine integral, generalised cosine integral, zeros}
\subjclass[2020]{41A60, 33B99}
\begin{document}

\title[Asymptotics for the generalised trigonometric integral and its zeros]{Asymptotic expansions for the generalised\\ trigonometric integral and its zeros}

\begin{abstract}
In this paper, we investigate the asymptotic properties of the generalised trigonometric integral $\ti(a, z, \alpha)$ and its associated modulus and phase functions for large complex values of $z$. We derive asymptotic expansions for these functions, accompanied by explicit and computable error bounds. For real values of $a$, the function $\ti(a, z, \alpha)$ possesses infinitely many positive real zeros. Assuming $a < 1$, we establish asymptotic expansions for the large zeros, accompanied by precise error estimates. The error bounds for the asymptotics of the phase function and its zeros will be derived by studying the analytic properties of both the phase function and its inverse. Additionally, we demonstrate that for real variables, the derived asymptotic expansions are enveloping, meaning that successive partial sums provide upper and lower bounds for the corresponding functions.
\end{abstract}

\maketitle

\section{Introduction and main results}\label{sec1}

Let $a$ be a complex number and $\alpha$ a real number satisfying $0 \leq \alpha < 1$. We define the generalised trigonometric integral $\ti(a, z, \alpha)$ as
\[
\ti(a,z,\alpha ) = \ci(a,z)\cos (\pi \alpha ) + \si(a,z)\sin (\pi \alpha ).
\]
Here, $\si(a, z)$ and $\ci(a, z)$ denote B\"ohmer's generalised sine and cosine integrals, respectively (see, e.g., \cite{Bohmer1939}, \cite[\S9.10]{Erdelyi1953}, \cite[\href{http://dlmf.nist.gov/8.21}{\S8.21}]{DLMF}). These functions are expressed in terms of the (upper) incomplete gamma function as follows:
\begin{align*}
\si(a, z) & = \frac{1}{2\im} \e^{\frac{\pi}{2}\im a} \Gamma\left(a, z \e^{-\frac{\pi}{2}\im}\right) - \frac{1}{2\im} \e^{-\frac{\pi}{2}\im a} \Gamma\left(a, z \e^{\frac{\pi}{2}\im}\right),
\\ \ci(a, z) & = \frac{1}{2} \e^{\frac{\pi}{2}\im a} \Gamma\left(a, z \e^{-\frac{\pi}{2}\im}\right) + \frac{1}{2} \e^{-\frac{\pi}{2}\im a} \Gamma\left(a, z \e^{\frac{\pi}{2}\im}\right).
\end{align*}
In particular, $\si(a, z)= \ti\left(a,z,\frac{1}{2}\right)$ and $\ci(a, z)=\ti(a,z,0)$. Generally, the functions $\ti(a, z, \alpha)$, $\ci(a, z)$, and $\si(a, z)$ are multivalued with a branch point at $z = 0$. Additionally, they are entire functions of the parameter $a$. For $m\in \mathbb{Z}$,
\begin{align*}
\ti\left( a,z\e^{2\pi m\im} ,\alpha \right) &= \e^{2\pi m\im a} \ti(a,z,\alpha ) + \left( 1 - \e^{2\pi m\im a} \right)\cos \left( \tfrac{\pi}{2}a - \pi \alpha \right)\Gamma (a),
\\ \si\left(a, z \e^{2\pi m \im}\right) &= \e^{2\pi m \im a} \si(a, z) + \left(1 - \e^{2\pi m \im a}\right) \sin\left(\tfrac{\pi}{2}a\right) \Gamma(a),
\\ \ci\left(a, z \e^{2\pi m \im}\right) &= \e^{2\pi m \im a} \ci(a, z) + \left(1 - \e^{2\pi m \im a}\right) \cos\left(\tfrac{\pi}{2}a\right) \Gamma(a),
\end{align*}
provided $a\notin \mathbb{Z}_{\ge 0}$ (cf. \cite[\href{http://dlmf.nist.gov/8.2.ii}{\S8.2(ii)}]{DLMF}). When $a$ is zero or a negative integer, these formulae remain valid if the right-hand side is replaced by its limiting value.

When $\Re(a)<1$, the following integral representations are valid:
\begin{align*}
\ti(a,z,\alpha) &= \int_z^\infty t^{a - 1} \cos( t+\pi\alpha)\id t,
\\ \si(a,z) &= \int_z^\infty t^{a - 1} \sin t\id t,
\\ \ci(a,z) & = \int_z^\infty t^{a - 1} \cos t\id t,
\end{align*}
which provide justification for the naming of these functions (see \cite[\href{http://dlmf.nist.gov/8.21.iii}{\S8.21(iii)}]{DLMF}). The classical sine and cosine integrals, $\si(z)$ and $\Ci(z)$, appear as special cases: $\si(z) =  - \si(0,z)$ and $\Ci(z) =  - \ci(0,z)$ \cite[\href{http://dlmf.nist.gov/8.21.v}{\S8.21(v)}]{DLMF}. These functions have applications in fields such as electromagnetic theory \cite{Lebedev1965}, optics \cite{Iizuka2008}, and signal processing \cite{Manolakis2011}.

The aim of this paper is to derive asymptotic expansions for the function $\ti(a, z, \alpha)$, along with its modulus and phase functions (defined below), as $z$ becomes large. For these expansions, we will provide explicit and computable error bounds. For real values of $a$, the function $\ti(a, z, \alpha)$ has infinitely many positive real zeros (see Section \ref{sec5}). Under the assumption that $a < 1$, we establish an asymptotic expansion for the large zeros, together with corresponding error bounds. The error bounds for the asymptotics of the phase function and the zeros will be derived by studying the analytic properties of the phase function and its inverse. Furthermore, we will demonstrate that for real variables, our asymptotic expansions are enveloping, meaning that successive partial sums provide upper and lower bounds for the corresponding functions.

To present our results, we introduce some additional notation. We define the auxiliary functions $\f(a, z)$ and $\g(a, z)$ as follows:
\begin{equation}\label{fdef}
\f(a, z) = \frac{1}{2\im} \e^{\frac{\pi}{2}\im a} \e^{-\im z} \Gamma\left(a, z \e^{-\frac{\pi}{2}\im}\right) - \frac{1}{2\im} \e^{-\frac{\pi}{2}\im a} \e^{\im z} \Gamma\left(a, z \e^{\frac{\pi}{2}\im}\right)
\end{equation}
and
\begin{equation}\label{gdef}
\g(a, z) = \frac{1}{2} \e^{\frac{\pi}{2}\im a} \e^{-\im z} \Gamma\left(a, z \e^{-\frac{\pi}{2}\im}\right) + \frac{1}{2} \e^{-\frac{\pi}{2}\im a} \e^{\im z} \Gamma\left(a, z \e^{\frac{\pi}{2}\im}\right).
\end{equation}
Similar to the generalised trigonometric integral, these functions are multivalued in general, with a branch point at $z=0$, and they are entire in the parameter $a$. Using this notation, the generalised trigonometric integral can be written as
\begin{equation}\label{tfg}
\ti(a,z,\alpha ) =  - \f(a,z)\sin (z- \pi \alpha ) + \g(a,z)\cos (z - \pi \alpha ).
\end{equation}
In particular, the generalised sine and cosine integrals can be expressed as
\begin{align}
\si(a,z) & = \f(a,z)\cos z + \g(a,z)\sin z ,\label{sfg}\\ \ci(a,z) & =  - \f(a,z)\sin z + \g(a,z)\cos z.\label{cfg}
\end{align}
The advantage of these auxiliary functions is that they exhibit non-oscillatory behaviour as $z$ becomes large. A useful property of these functions is given by
\begin{equation}\label{fgdiff}
\frac{\d \f(a,z)}{\d z}  = - \g(a,z), \quad \frac{\d \g(a,z)}{\d z}  = \f(a,z) - z^{a-1}.
\end{equation}
For $\Re(a) < 1$ and $|\arg z|< \pi$, these results can be easily derived from the known trigonometric integral representations of these functions \cite[Eqs. \href{http://dlmf.nist.gov/8.21.E22}{(8.21.22)} and \href{http://dlmf.nist.gov/8.21.E23}{(8.21.23)}]{DLMF}, and the restrictions on $a$ and $z$ can subsequently be lifted through analytic continuation. For real values of $a$ and $z$, with $a < 1$ and $z > 0$, we further define  
\begin{equation}\label{Mphidef}
M(a,z)\e^{\im\varphi (a,z)}  = \e^{\frac{\pi}{2}\im a} \Gamma \left( a,z\e^{ - \frac{\pi}{2}\im} \right),
\end{equation}
where the modulus function $M(a,z)$ and the phase function $\varphi (a,z)$ are continuous real-valued functions of $a$ and $z$. The branch of $\varphi(a, z)$ is determined by
\[
\lim _{z \to 0^ +} \varphi (a,z) = \tfrac{\pi}{4}(a + \left| a \right|),
\]
as suggested by the local behaviour of $\Gamma(a, z)$ near $z = 0$ (see \cite[Eqs. \href{http://dlmf.nist.gov/8.4.E15}{(8.4.15)} and \href{http://dlmf.nist.gov/8.7.E3}{(8.7.3)}]{DLMF}). From the integral representations \eqref{fgrepr} and the identity \eqref{M2}, it follows that $M(a,z)>0$ for $z > 0$, ensuring that both $M(a,z)$ and $\varphi(a,z)$ are well defined. With these definitions, we have
\begin{equation}\label{timodulusphase}
\ti(a,z,\alpha ) = M(a,z)\cos (\varphi (a,z) - \pi \alpha)
\end{equation}
for $a<1$ and $z > 0$. In particular, the generalised sine and cosine integrals can be written as
\[
\si(a,z) = M(a,z)\sin (\varphi (a,z)), \quad \ci(a,z) = M(a,z)\cos (\varphi (a,z)).
\]
We also observe that
\begin{equation}\label{M2}
M^2(a, z) =\si^2(a, z) + \ci^2(a, z) =\f^2(a, z) + \g^2(a, z).
\end{equation}
Finally, for $\Re(p)>0$, we define
\begin{equation}\label{Pidef}
\Pi_p(z)=z^p \f(1-p,z).
\end{equation}
The function $\Pi_p(z)$ was originally introduced by Dingle \cite[pp. 407]{Dingle1973}, and following his convention, we refer to it as a basic terminant (although Dingle's notation slightly differs from ours, e.g., $\Pi_{p-1}(z)$ is used for our $\Pi_p(z)$).

We now present the main results of the paper, starting with the asymptotic expansions of $\f(a,z)$ and $\g(a,z)$, which are well-documented in the literature:
\begin{equation}\label{fasymp}
\f(a,z) \sim z^{a - 1} \sum_{n = 0}^\infty  ( - 1)^n \frac{\left(1 - a\right)_{2n} }{z^{2n}}
\end{equation}
and
\begin{equation}\label{gasymp}
\g(a,z) \sim z^{a - 1} \sum_{n = 0}^\infty  ( - 1)^n \frac{\left(1 - a\right)_{2n + 1} }{z^{2n + 1}} ,
\end{equation}
as $z\to\infty$ in the sector $|\arg z|\le \pi- \delta$ ($<\pi$), uniformly with respect to bounded complex values of $a$ (see, for instance, \cite[\S9.10]{Erdelyi1953} or \cite[\href{http://dlmf.nist.gov/8.21.viii}{\S8.21(viii)}]{DLMF}). Asymptotic expansions for the generalised trigonometric integrals can be derived by combining these results with \eqref{tfg}, \eqref{sfg}, and \eqref{cfg}. In the following proposition, we provide explicit expressions for the remainder terms of the asymptotic expansions \eqref{fasymp} and \eqref{gasymp}.

\begin{proposition}\label{prop1} For each non-negative integer $N$, the auxiliary functions $\f(a,z)$ and $\g(a,z)$ can be expressed as
\begin{equation}\label{fexp}
\f(a, z) = z^{a - 1} \left( \sum_{n = 0}^{N - 1} (-1)^n \frac{\left(1 - a\right)_{2n}}{z^{2n}} + R_N^{(\f)}(a,z) \right),
\end{equation}
and
\begin{equation}\label{gexp}
\g(a, z) = z^{a - 1} \left( \sum_{n = 0}^{N - 1} (-1)^n \frac{\left(1 - a\right)_{2n + 1}}{z^{2n + 1}} + R_N^{(\g)}(a,z) \right),
\end{equation}
where the remainder terms are given by the following expressions in terms of the basic terminant:
\begin{equation}\label{Rfgexpr}
R_N^{(\f)}(a,z)=(-1)^N \frac{\left(1 - a\right)_{2N}}{z^{2N}} \Pi_{2N + 1 - a}(z),\quad R_N^{(\g)}(a,z) = (-1)^N \frac{\left(1 - a\right)_{2N + 1}}{z^{2N + 1}} \Pi_{2N + 2 - a}(z),
\end{equation}
provided $2N + 1 > \Re(a)$ for $R_N^{(\f)}(a,z)$ and $2N + 2 > \Re(a)$ for $R_N^{(\g)}(a,z)$.
\end{proposition}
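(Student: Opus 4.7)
My plan is to derive both expansions from a two-step recurrence for $\f$ and $\g$ obtained by integration by parts, then rewrite the resulting remainder using the basic terminant \eqref{Pidef}, and finally extend the range of $a$ by analytic continuation.

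I would first record the integral representations
\[
\f(a,z) = \int_z^\infty t^{a-1}\sin(t-z)\id t,\qquad \g(a,z) = \int_z^\infty t^{a-1}\cos(t-z)\id t,
\]
valid for $\Re(a)<1$ and $z>0$; these follow by combining \eqref{sfg}--\eqref{cfg} with the integral representations of $\si(a,z)$ and $\ci(a,z)$ quoted earlier. A single integration by parts in each produces
\[
\f(a,z) = z^{a-1} + (a-1)\g(a-1,z),\qquad \g(a,z) = (1-a)\f(a-1,z),
\]
the boundary term at $t=z$ contributing $z^{a-1}$ in the first case and vanishing in the second. A second integration by parts, applied to $\g(a-1,z)$ and $\f(a-1,z)$ respectively, yields the clean two-step recurrences
\[
\f(a,z) = z^{a-1} - (1-a)_2\,\f(a-2,z),\qquad \g(a,z) = (1-a)\,z^{a-2} - (1-a)_2\,\g(a-2,z).
\]

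A straightforward induction on $N$ then gives
\[
\f(a,z) = \sum_{n=0}^{N-1}(-1)^n (1-a)_{2n}\,z^{a-1-2n} + (-1)^N(1-a)_{2N}\,\f(a-2N,z)
\]
and the analogous identity for $\g$ with remainder $(-1)^N(1-a)_{2N}\g(a-2N,z)$. For the $\g$-case I would substitute $\g(a-2N,z)=(1-a+2N)\f(a-2N-1,z)$ using $\g(b,z)=(1-b)\f(b-1,z)$ and absorb the new factor via $(1-a)_{2N}(1-a+2N)=(1-a)_{2N+1}$.

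The final step is to rewrite these $\f$-remainders through the basic terminant. Solving \eqref{Pidef} for $\f$ gives $\f(a-2N,z)=z^{a-2N-1}\Pi_{2N+1-a}(z)$ and $\f(a-2N-1,z)=z^{a-2N-2}\Pi_{2N+2-a}(z)$, so that factoring out $z^{a-1}$ delivers \eqref{fexp}--\eqref{gexp} with the remainders \eqref{Rfgexpr}. The hypotheses $2N+1>\Re(a)$ and $2N+2>\Re(a)$ arise precisely to keep these basic terminants within the defining range $\Re(p)>0$ of \eqref{Pidef}. The identities above were derived under $\Re(a)<1$, but they extend to the full stated ranges by analytic continuation in $a$, since every ingredient is either polynomial in $a$ or entire in $a$. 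The main obstacle is the bookkeeping of Pochhammer shifts through the iteration and tracking the domain of validity; no genuine convergence or estimation argument is needed at this stage.
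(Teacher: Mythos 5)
Your proof is correct, but it follows a genuinely different route from the paper. The paper starts from the representation $\Gamma(a,z) = \frac{z^{a-1}\e^{-z}}{\Gamma(1-a)}\int_0^{+\infty}\frac{s^{-a}\e^{-s}}{1+s/z}\id s$ and combines the two rotated copies in \eqref{fdef}--\eqref{gdef} to obtain $\f$ and $\g$ as Stieltjes-type integrals $\frac{z^{a-1}}{\Gamma(1-a)}\int_0^{+\infty}\frac{s^{-a}\e^{-s}}{1+(s/z)^2}\id s$ (and its analogue); it then inserts the finite geometric series \eqref{geom}, integrates term by term, and recognises the remainder integral as a basic terminant via the induced representation $\Pi_p(z)=\frac{1}{\Gamma(p)}\int_0^{+\infty}\frac{s^{p-1}\e^{-s}}{1+(s/z)^2}\id s$. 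You instead integrate the trigonometric representations $\f(a,z)=\int_z^\infty t^{a-1}\sin(t-z)\id t$, $\g(a,z)=\int_z^\infty t^{a-1}\cos(t-z)\id t$ by parts to get the two-step recurrences, induct, and then invoke the definition \eqref{Pidef} directly to convert the shifted remainder $\f(a-2N,z)$ (resp.\ $\f(a-2N-1,z)$) into a terminant; I checked the boundary terms, the Pochhammer bookkeeping, and the index shifts, and they all come out right. Your route is more elementary and reaches the terminant form of \eqref{Rfgexpr} with no extra identification step, since the remainder is literally a shifted $\f$. What the paper's route buys is the explicit nonnegative integral representation of $R_N^{(\f)}$ and $R_N^{(\g)}$ (its equations \eqref{Rfeq}--\eqref{Rgeq}), which is what feeds directly into the sign and enveloping statements and into the proof of Theorem \ref{thm1}; with your approach those facts are still available, but only after recalling the integral representation of $\Pi_p$ separately. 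One small point: at the end you should continue analytically in $z$ as well as in $a$ (your identities are derived for $z>0$, and the functions are multivalued with a branch point at $z=0$), exactly as the paper does when it lifts the restriction on $\arg z$.
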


By combining Proposition \ref{prop1} with the explicit bounds for the basic terminant provided in Appendix \ref{terminant}, we obtain explicit and computable error estimates for the asymptotic expansions of the auxiliary functions $\f(a, z)$ and $\g(a, z)$. Specifically, Proposition \ref{propt1} shows that when $z > 0$, with $2N + 1 > a$ for $\f(a, z)$ and $2N + 2 > a$ for $\g(a, z)$, the remainder terms $R_N^{(\f)}(a, z)$ and $R_N^{(\g)}(a, z)$ are bounded in magnitude by the corresponding first neglected term and share its sign. This constitutes the enveloping property of the expansions \eqref{fasymp} and \eqref{gasymp}.

We will now discuss the asymptotic expansion of the function $M^2(a, z)$ and its associated error bounds. The identity \eqref{M2} enables us to extend $M^2(a,z)$ as an analytic function of $z$ in the domain $|\arg z| < \pi$ and as an entire function of $a$. Moreover, combining \eqref{M2} with \eqref{fgdiff}, it can be readily demonstrated that
\begin{equation}\label{Mdiff}
\frac{\d}{\d z}M^2 (a,z) =  - 2 z^{a - 1} \g(a,z).
\end{equation}
Hence, by \eqref{gasymp} and the well-known theorem on the integration of asymptotic power series \cite[Lemma 1.2, \S1.5]{Temme2015}, the asymptotic expansion of $M^2(a, z)$ is immediately obtained:
\begin{equation}\label{Masymp}
M^2(a, z) \sim z^{2a - 1} \sum_{n=0}^\infty (-1)^n \frac{\left(1 - a\right)_{2n+1}}{n + 1 - a} \frac{1}{z^{2n+1}} ,
\end{equation}
as $z\to\infty$ in the sector $|\arg z|\le \pi- \delta$ ($<\pi$), uniformly with respect to bounded complex values of $a$. The fact that the constant of integration is zero follows from \eqref{M2}, \eqref{fasymp}, and \eqref{gasymp}. Note that the apparent singularity in the coefficients at positive integer values of $a$ is, in fact, removable. The following theorem provides bounds for the remainders of the asymptotic expansion \eqref{Masymp}. These error bounds can be further simplified by using the estimates for $\sup_{r \geq 1} \left| \Pi_{p}(zr) \right|$ presented in Appendix \ref{terminant}.

\begin{theorem}\label{thm1} For each non-negative integer $N$, the square of the modulus function can be written as
\begin{equation}\label{Mexp}
M^2(a, z) = z^{2a - 1} \left(\sum_{n=0}^{N-1} (-1)^n \frac{\left(1 - a\right)_{2n+1}}{n + 1 - a} \frac{1}{z^{2n+1}} + R_N^{(M)}(a, z) \right),
\end{equation}
where, for $|\arg z| < \pi$ and $N+1 > \Re(a)$, the remainder term $R_N^{(M)}(a, z)$ is related to the remainder term $R_N^{(\g)}(a, z)$ by
\begin{equation}\label{RMexpr}
R_N^{(M)} (a,z) = 2\int_1^{+\infty} s^{2a - 2} R_N^{(\g)} (a,zs)\id s,
\end{equation}
and satisfies the following bound:
\begin{equation}\label{RMbound}
\left| R_N^{(M)}(a, z) \right| \le \frac{\left| (1 - a)_{2N + 1} \right|}{N + 1 - \Re(a)} \frac{1}{\left| z \right|^{2N + 1}} \sup_{r \geq 1} \left| \Pi_{2N + 2 - a}(zr) \right|.
\end{equation}
Furthermore, when $z > 0$ and $N+1 > a$, the remainder term $R_N^{(M)}(a, z)$ does not exceed the first omitted term in absolute value and is of the same sign.
\end{theorem}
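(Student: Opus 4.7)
The plan is to deduce all three assertions from the differential equation \eqref{Mdiff} combined with the explicit form of $R_N^{(\g)}(a,z)$ furnished by Proposition \ref{prop1}. Introduce
\[
h(z) := M^2(a,z) - z^{2a-1}\sum_{n=0}^{N-1}(-1)^n\frac{(1-a)_{2n+1}}{n+1-a}\frac{1}{z^{2n+1}},
\]
so that verifying \eqref{Mexp} amounts to the identification $R_N^{(M)}(a,z) = z^{1-2a}h(z)$. A direct differentiation of the partial sum, using that each exponent $2a-2-2n$ supplies a factor $-2(n+1-a)$ which cancels the denominator $n+1-a$, combined with \eqref{Mdiff} and \eqref{gexp}, yields the clean identity
\[
h'(z) = -2z^{2a-2}R_N^{(\g)}(a,z).
\]

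I would then invoke the already-established asymptotic expansion \eqref{Masymp} to assert that $h(z) = O(z^{2\Re(a)-2N-2})$ uniformly on each sector $|\arg z|\le \pi-\delta$, which under the hypothesis $N+1>\Re(a)$ forces $h(z)\to 0$ at infinity. Since $h$ is analytic in $|\arg z|<\pi$, I may integrate $h'$ back along the ray $t = zs$, $s\in[1,+\infty)$, which stays in the sector, to obtain
\[
h(z) = -\int_1^{+\infty} h'(zs)\, z\, ds = 2z^{2a-1}\int_1^{+\infty} s^{2a-2}R_N^{(\g)}(a,zs)\,ds,
\]
and division by $z^{2a-1}$ delivers \eqref{RMexpr}. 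The bound \eqref{RMbound} now follows by substituting \eqref{Rfgexpr}, extracting $\sup_{r\ge 1}|\Pi_{2N+2-a}(zr)|$ from the integral, and evaluating the elementary integral $\int_1^{+\infty} s^{2\Re(a)-2N-3}\,ds = (2(N+1-\Re(a)))^{-1}$.

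For the enveloping statement, with $z>0$ and real $a<N+1$, I would appeal to the estimates for the basic terminant given in Appendix \ref{terminant} to ensure $0 < \Pi_{2N+2-a}(zs) \le 1$ for every $s\ge 1$. Inserting \eqref{Rfgexpr} into the integrand of \eqref{RMexpr} shows that the integrand has constant sign equal to that of $(-1)^N(1-a)_{2N+1}$; hence $R_N^{(M)}(a,z)$ shares the sign of the first omitted term, and the same computation as in the previous paragraph, with $\Pi$ replaced by its upper bound $1$, yields $|R_N^{(M)}(a,z)|\le (1-a)_{2N+1}/((N+1-a)z^{2N+1})$, which is precisely the magnitude of that term. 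The only delicate point in the whole argument is the vanishing of $h$ along the complex ray $t=zs$; this is handled by the uniformity in bounded $a$ of \eqref{Masymp} on any proper subsector, and is the sole place where the abstract theorem on integration of asymptotic series is invoked.
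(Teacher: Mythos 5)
Your proposal is correct and follows essentially the same route as the paper: both derive the relation $\frac{\d}{\d z}\left(z^{2a-1}R_N^{(M)}(a,z)\right)=-2z^{2a-2}R_N^{(\g)}(a,z)$ from \eqref{Mdiff} and \eqref{gexp} (the paper phrases it as a first-order inhomogeneous equation for $R_N^{(M)}$ itself), integrate back from infinity along the ray $t=zs$ using the decay supplied by \eqref{Masymp} under $N+1>\Re(a)$, and then obtain the bound and the enveloping property by inserting \eqref{Rfgexpr} and Proposition \ref{propt1} into \eqref{RMexpr}. No gaps.
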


We note that an asymptotic expansion for the modulus function $M(a,z)$ can be derived by taking the square root of the expansion \eqref{Masymp} and expanding in powers of $z$. However, obtaining realistic error bounds for the resulting expansion that are valid over large domains of $z$ and $a$ appears to be challenging.

We now proceed with the analysis of the asymptotic expansion of the phase function $\varphi(a, z)$ and its associated error bounds. As shown in Appendix \ref{coefficients}, the derivative of $\varphi(a, z)$ is given by
\begin{equation}\label{phidiff}
\frac{\d \varphi(a,z)}{\d z} = z^{a - 1}\frac{\f(a,z)}{ M^2(a,z)}.
\end{equation}
Using the asymptotic form of $M^2(a,z)$ in \eqref{Masymp}, it follows that for any $a<1$, there exists a non-negative constant $C_a$ such that $\varphi(a,z)$ extends analytically to the domain $\left\{ z:\left|\arg z\right| < \pi ,\left| z \right| > C_a \right\}$. By \eqref{fasymp}, \eqref{Masymp}, and the well-known theorems on the ratios and integration of asymptotic power series \cite[Lemmas 1.1 and 1.2, \S1.5]{Temme2015}, the asymptotic expansion of $\varphi(a,z)$ is given by
\begin{equation}\label{phiasymp}
\varphi(a,z) \sim z + \frac{\pi}{2} - \sum_{n=0}^\infty (-1)^n \frac{t_n(1-a)}{2n+1}\frac{1}{z^{2n + 1}}
\end{equation}
as $z\to\infty$ in the sector $\left|\arg z\right|\le \pi- \delta$ ($<\pi$), uniformly with respect to bounded real values of $a$ with $a<1$. As shown in Appendix \ref{coefficients}, the coefficients $t_n(1-a)$ are monic polynomials in $1-a$ of degree $2n+1$ with integer coefficients, which can be computed using a recurrence relation. When determining the constant of integration $\frac{\pi}{2}$ in \eqref{phiasymp}, we used the asymptotics
\[
\e^{\frac{\pi }{2}\im a} \Gamma \left( a,z\e^{ - \frac{\pi}{2}\im} \right) = z^{a-1}\e^{\left( z + \frac{\pi }{2} \right)\im} (1 + o(1)), \quad z\to+\infty
\]
(see, e.g., \cite[\href{http://dlmf.nist.gov/8.11.i}{\S8.11(i)}]{DLMF}) along with the continuity of the phase function $\varphi(a,z)$. Combining \eqref{Masymp}, \eqref{phiasymp}, and \eqref{timodulusphase} gives an alternative asymptotic expansion for $\ti(a,z,\alpha)$. The following theorem provides bounds for the remainders of the asymptotic expansion \eqref{phiasymp} when $\left|\arg z\right|<\frac{\pi}{2}$.

\begin{theorem}\label{thm2} The phase function $\varphi(a, z)$ can be analytically extended to the right half-plane $\Re(z) > 0$. For any non-negative integer $N$, this extended function admits the following expansion:
\begin{equation}\label{phiexpansion}
\varphi(a,z) = z + \frac{\pi}{2} - \sum_{n=0}^{N-1} (-1)^n \frac{t_n(1-a)}{2n+1}\frac{1}{z^{2n + 1}} +R_N^{(\varphi)}(a,z),
\end{equation}
where the remainder term $R_N^{(\varphi)}(a, z)$ satisfies the bound
\[
\left| R_N^{(\varphi)}(a, z)\right| \le \frac{t_N (1 - a)}{2N + 1}\frac{1}{\left| z \right|^{2N + 1} } \times \begin{cases} 1 & \text{if }\; \left|\arg z\right| \leq \frac{\pi}{4}, \\ \left|\csc ( 2\arg z)\right| & \text{if }\; \frac{\pi}{4} < \left|\arg z\right| < \frac{\pi}{2}. \end{cases}
\]
Additionally, if $z > 0$, the remainder term $R_N^{(\varphi)}(a, z)$ does not exceed the first neglected term in absolute value and has the same sign. The coefficients $t_n(1 - a)$ are monic polynomials in $1 - a$ of degree $2n + 1$ with integer coefficients, and they satisfy $t_n(1 - a) > 0$.
\end{theorem}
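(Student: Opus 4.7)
My plan is to express $\varphi(a,z)$ in closed form via an arctangent and extract both the expansion and the error bound from this representation. Using \eqref{phidiff} together with \eqref{fgdiff}, a short computation gives
\[
\frac{\d}{\d z}\arctan\!\left(\frac{\g(a,z)}{\f(a,z)}\right) = \frac{\g'(a,z)\f(a,z) - \g(a,z)\f'(a,z)}{\f^2(a,z) + \g^2(a,z)} = 1 - \varphi'(a,z).
\]
Integrating and fixing the constant by the leading asymptotic $\varphi(a,z) - z - \pi/2 \to 0$ (implicit in \eqref{phiasymp}) yields the identity
\[
\varphi(a,z) = z + \frac{\pi}{2} - \arctan\!\left(\frac{\g(a,z)}{\f(a,z)}\right).
\]
The right-hand side is analytic on any simply connected region in which $\g/\f \neq \pm\im$, equivalently $M^2(a,z) = \f^2 + \g^2 \neq 0$. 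From \eqref{fdef} and \eqref{gdef} one extracts the factorisation $M^2(a,z) = \Gamma(a,-\im z)\Gamma(a,\im z)$, and classical results on the zeros of the upper incomplete gamma function for real $a < 1$ confirm that $M^2$ is non-vanishing throughout the right half-plane, giving the analytic extension of $\varphi$ to $\Re z > 0$.

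For the expansion itself, I would substitute Proposition \ref{prop1} into $\g/\f$, producing a formal series $(1-a)/z + O(1/z^3)$ in descending powers of $z$, and then insert into the Maclaurin series of $\arctan$. Grouping by powers of $z^{-1}$ produces the coefficients $t_n(1-a)$; their structural properties --- monic polynomials in $1-a$ of degree $2n+1$ with integer coefficients, strictly positive for $a < 1$ --- are established in Appendix \ref{coefficients} by induction from a recurrence obtained by equating coefficients of $z^{-k}$ in the identity $\varphi'(a,z)M^2(a,z) = z^{a-1}\f(a,z)$.

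For the error bound, I would write
\[
R_N^{(\varphi)}(a,z) = -\int_z^{+\infty}\!\left(\varphi'(a,t) - 1 - \sum_{n=0}^{N-1}(-1)^n\frac{t_n(1-a)}{t^{2n+2}}\right)\id t
\]
along a suitable path in $\Re t > 0$. For $z > 0$, I take the real ray $[z,+\infty)$; using $0 < \g(a,t)/\f(a,t) < 1$ for large $t$, the alternating-series structure of $\arctan$, the enveloping estimates of Proposition \ref{prop1}, and the positivity of the $t_n(1-a)$, one obtains the sign and the first-omitted-term bound. For complex $z$ with $\Re z > 0$, I take the horizontal ray $t = z + s$, $s \geq 0$, and bound the integrand via Proposition \ref{prop1}, Theorem \ref{thm1}, and the basic terminant estimates of Appendix \ref{terminant}.

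The principal obstacle will be extracting the sharp factor $|\csc(2\arg z)|$ for $\pi/4 < |\arg z| < \pi/2$: naive $ML$-estimates on the horizontal path yield weaker secant-type factors, whereas the cosecant factor reflects a sharper bound already built into the terminant estimates and must be invoked carefully rather than through a direct modulus estimate on the integrand. A second subtlety is the rigorous verification of $M^2(a,z) \neq 0$ throughout the right half-plane, on which the stated analytic extension depends; here, the structure of zeros of $\Gamma(a,\cdot)$ for real $a < 1$ must be invoked with care.
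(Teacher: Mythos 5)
Your arctangent identity $\varphi(a,z)=z+\tfrac{\pi}{2}-\arctan\bigl(\g(a,z)/\f(a,z)\bigr)$ is correct (it follows from \eqref{fgdiff} and \eqref{phidiff} exactly as you compute), and your route to the analytic continuation --- $M^2(a,z)=\Gamma\bigl(a,z\e^{-\frac{\pi}{2}\im}\bigr)\Gamma\bigl(a,z\e^{\frac{\pi}{2}\im}\bigr)\neq 0$ for $a<1$ and $\Re(z)>0$, via \eqref{Gammaint} --- is essentially Lemma \ref{lemma1} in different clothing. The formal expansion and the recurrence for the $t_n$ likewise match Appendix \ref{coefficients}. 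The gaps are all in the error analysis, and they are genuine. First, the enveloping property for $z>0$ does not follow from the route you sketch: enveloping is not preserved under taking quotients (such as $\f/M^2$) or under composition with the Maclaurin series of $\arctan$, so Proposition \ref{prop1} plus ``the alternating-series structure of $\arctan$'' gives no control on the sign or size of the remainder in the expansion of $\varphi'(a,t)$, which is precisely what your integration along the ray requires. Second, the positivity $t_N(1-a)>0$ --- without which the stated bound is not even meaningful --- is \emph{not} established in Appendix \ref{coefficients}: the appendix proves monicity and integrality only, and merely \emph{conjectures} positivity of the coefficients of $t_n(x)$. Third, as you yourself concede, your approach has no mechanism for producing the factor $\left|\csc(2\arg z)\right|$; the terminant bounds of Appendix \ref{terminant} control $R_N^{(\f)}$ and $R_N^{(\g)}$, not $R_N^{(\varphi)}$, and cannot be ``invoked carefully'' to transfer to the phase.

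The single missing idea that resolves all three points at once is a one-signed integral representation of the remainder. The paper applies Cauchy's theorem to $\Phi(a,t)-t$ on a $D$-shaped contour in $\Re(t)\ge 0$ and lets the radius tend to infinity (using $\Phi(a,z)=z+\mathcal{O}(z^{-1})$ and the superalgebraic decay of $\Re\Phi\bigl(a,s\e^{\frac{\pi}{2}\im}\bigr)$) to obtain
\[
\varphi(a,z)=z+\frac{\pi}{2}+\frac{1}{z}\frac{2}{\pi}\int_0^{+\infty}\frac{\Re\Phi\bigl(a,s\e^{\frac{\pi}{2}\im}\bigr)}{1+\left(s/z\right)^2}\,\d s,\qquad \Re(z)>0,
\]
together with the sign lemma $\Re\Phi\bigl(a,s\e^{\frac{\pi}{2}\im}\bigr)<0$ for $s>0$. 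Expanding the kernel by \eqref{geom} then exhibits $t_n(1-a)=-\frac{4n+2}{\pi}\int_0^{+\infty}s^{2n}\Re\Phi\bigl(a,s\e^{\frac{\pi}{2}\im}\bigr)\,\d s>0$ and writes $R_N^{(\varphi)}(a,z)$ as an integral of the fixed-sign density against the kernel $s^{2N}/(1+(s/z)^2)$; both the enveloping property for $z>0$ and the cosecant factor for $\tfrac{\pi}{4}<|\arg z|<\tfrac{\pi}{2}$ then drop out of the elementary inequality $\left|1+u^2\right|\ge\min\bigl(1,\left|\sin(2\arg u)\right|\bigr)$. Without this representation, or an equivalent one, your plan cannot reach the stated bounds.
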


We now turn our attention to the zeros of the function $\ti(a, z, \alpha)$. In this discussion, we continue to assume that $a$ is a real number satisfying $a < 1$. From the integral representations \eqref{fgrepr} and the identity \eqref{M2}, it follows that $\f(a,z)$ and $M^2(a,z)$ are positive for $z > 0$. Consequently, by \eqref{phidiff}, the phase function $\varphi(a, z)$ is strictly monotonically increasing with respect to $z$ along the positive real axis for any fixed $a$. Furthermore, since 
\[
\lim_{z \to 0^ +} \varphi (a,z) = \tfrac{\pi}{4}(a+\left|a\right|) \;\text{ and }\; \varphi(a,z) \sim z \;\text{ as }\; z\to +\infty,
\]
$\varphi(a,z)$ defines an injective mapping from $(0, +\infty)$ to $\left(\frac{\pi}{4}(a+\left|a\right|), +\infty\right)$ for each fixed $a$. Therefore, the representation \eqref{timodulusphase} implies that $\ti(a,z,\alpha)$ has infinitely many positive real $z$-zeros when $a < 1$. (The case $a = 1$ is trivial, as $ \ti(1, z, \alpha) = -\sin(z - \pi \alpha)$, so we exclude it from our discussion. For the case $a>1$, refer to Section \ref{sec5}.) Let $z^\ast$ denote a positive zero of $\ti(a,z,\alpha)$. From \eqref{timodulusphase}, we have
\[
\varphi(a,z^\ast)=\pi\left(k+\alpha+\tfrac{1}{2}\right)
\]
for some integer $k$. Since the range of $\varphi(a,z)$ consists of real numbers greater than $\frac{\pi}{4}(a+\left|a\right|)$, the inequality
\begin{equation}\label{kcond}
k+\alpha >\tfrac{1}{4}(a + \left| a \right| - 2)
\end{equation}
must hold. Because $\varphi(a,z)$ is injective, for any integer $k$ satisfying \eqref{kcond}, there is a unique pre-image of $\pi\left(k + \alpha + \frac{1}{2}\right)$ under $\varphi(a,z)$. This preimage is a positive root of $\ti(a,z,\alpha)$. Furthermore, all positive roots can be obtained in this way. We denote the $k^{\text{th}}$ positive zero of $\ti(a,z,\alpha)$, arranged in ascending order, by $z_\kappa(a)$, where $\kappa = k + \alpha > \frac{1}{4}(a + \left| a \right| - 2)$. Accordingly, the indexing begins at $k = -1$ or $k = 0$, depending on whether $\frac{1}{4}(a + \left| a \right| - 2) - \alpha < -1$ or $\frac{1}{4}(a + \left| a \right| - 2) - \alpha \geq -1$, respectively. In particular, using the notation from \cite[\href{http://dlmf.nist.gov/6.13}{\S6.13}]{DLMF}, the zeros of the classical sine and cosine integrals are given by $s_k = z_{k+1/2}(0)$ and $c_k = z_k(0)$ for $k = 0, 1, 2, \ldots\,$. It follows that
\[
\varphi (a,z_\kappa  (a)) -\tfrac{\pi}{2}= \pi( k + \alpha )  = \pi\kappa.
\]
Since $\varphi(a, z) - \frac{\pi}{2}$ is an injective function of $z$, it has an inverse function, $X(a, w)$, say. For real $a < 1$ and $w > \frac{\pi}{4}(a+\left|a\right|-2)$, $X(a, w)$ is a continuous, strictly monotonically increasing, positive real-valued function of $a$ and $w$ that satisfies
\begin{equation}\label{Xrelation}
X(a, \pi\kappa ) = z_\kappa  (a).
\end{equation}
Thus, to derive an asymptotic expansion for the large zeros (i.e., as $\kappa \to +\infty$, or equivalently $k \to +\infty$), it suffices to obtain a large-$w$ asymptotic expansion for $X(a, w)$. Applying the results from \cite[Theorems 2.3 and 2.4]{Olver1999} on the reversion of asymptotic expansions, we find that for any $a < 1$, there exists a non-negative constant $K_a$ such that $X(a, w)$ extends holomorphically to any closed annular sector properly contained within the region $\{w : |\arg w| < \pi, |w| > K_a\}$. Moreover, noting that the expansion \eqref{phiasymp} for $\varphi(a, z)$ includes only odd powers of $z$, the function $X(a, w)$ admits the asymptotic expansion
\begin{equation}\label{Xasymp}
X(a,w) \sim w + \sum_{n = 0}^\infty ( - 1)^n \frac{c_n(1-a)}{2n+1}\frac{1}{w^{2n+1}}
\end{equation}
as $w\to\infty$ in the sector $\left|\arg w\right|\le \pi- \delta$ ($<\pi$), uniformly with respect to bounded real values of $a$ with $a<1$. As shown in Appendix \ref{coefficients}, the coefficients $c_n(1-a)$ are monic polynomials in $1-a$ of degree $2n+1$, with rational coefficients that can be computed via a recurrence relation. From \eqref{Xrelation} and \eqref{Xasymp}, it follows that the large zeros of the generalised trigonometric integral $\ti(a, z, \alpha)$ have the asymptotic expansion
\begin{equation}\label{zasymp}
z_\kappa(a) \sim \pi\kappa + \sum_{n = 0}^\infty (-1)^n \frac{c_n(1-a)}{2n+1}\frac{1}{(\pi\kappa)^{2n+1}}
\end{equation}
as $\kappa \to +\infty$ (or equivalently $k \to +\infty$), uniformly for bounded real values of $a$ with $a < 1$. The following theorem provides bounds for the remainders of the asymptotic expansion \eqref{Xasymp} when $\left|\arg w\right|<\frac{\pi}{2}$.

\begin{theorem}\label{thm3} The function $X(a, w)$ can be analytically extended to the right half-plane $\Re(w) > 0$. For any non-negative integer $N$, this extended function admits the following expansion:
\begin{equation}\label{Xtrunc}
X(a,w)=w + \sum_{n = 0}^{N-1} ( - 1)^n \frac{c_n(1-a)}{2n+1}\frac{1}{w^{2n+1}} +R_N^{(X)}(a,w),
\end{equation}
where the remainder term $R_N^{(X)}(a, w)$ satisfies the bound
\[
\left| R_N^{(X)}(a, w)\right| \le \frac{c_N (1 - a)}{2N + 1}\frac{1}{\left| w \right|^{2N + 1} } \times \begin{cases} 1 & \text{if }\; \left|\arg w\right| \leq \frac{\pi}{4}, \\ \left|\csc ( 2\arg w)\right| & \text{if }\; \frac{\pi}{4} < \left|\arg w\right| < \frac{\pi}{2}. \end{cases}
\]
Additionally, if $w > 0$, the remainder term $R_N^{(X)}(a, w)$ does not exceed the first neglected term in absolute value and has the same sign. The coefficients $c_n(1 - a)$ are monic polynomials in $1 - a$ of degree $2n + 1$ with rational coefficients, and they satisfy $c_n(1 - a) > 0$.
\end{theorem}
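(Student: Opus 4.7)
The plan is to follow the template of Theorem \ref{thm2}, viewing $X(a,w)$ as the functional inverse of $\psi(a,z):=\varphi(a,z)-\pi/2$ and transferring the analytic/positivity structure through the inversion. For the analytic extension, Theorem \ref{thm2} gives that $\psi(a,z)$ is holomorphic on $\Re(z)>0$ with $\psi(a,z)-z=O(|z|^{-1})$ uniformly for bounded real $a<1$. A Rouch\'e-type comparison with the identity, combined with the fact that $\psi(a,\cdot)$ is a strictly increasing bijection of the positive real axis onto itself (as recalled before \eqref{Xrelation}), shows that $\psi(a,\cdot)$ is univalent on $\Re(z)>0$ and its image contains the right half-plane; this provides the required analytic extension of $X(a,w)$ to $\Re(w)>0$, consistent with the holomorphic extension on an annular sector supplied by \cite{Olver1999}.

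The heart of the argument is the construction of a Stieltjes-type representation for the remainder. Defining $R_N^{(X)}(a,w)$ by \eqref{Xtrunc}, the objective is to exhibit a formula
\[
R_N^{(X)}(a,w)=\frac{(-1)^N}{w^{2N+1}}\int_0^{+\infty}\!\frac{\rho_N(t;a)}{1+t^2/w^2}\id t
\]
with $\rho_N(\,\cdot\,;a)\ge 0$ and $\int_0^{+\infty}\rho_N(t;a)\id t=c_N(1-a)/(2N+1)$, the constant being pinned down by letting $w\to+\infty$ along the positive reals and matching with \eqref{Xasymp}. To construct $\rho_N$, I would differentiate $\psi(a,X(a,w))=w$ and invoke \eqref{phidiff} to obtain
\[
X'(a,w)=\frac{M^2(a,X(a,w))}{X(a,w)^{a-1}\f(a,X(a,w))}.
\]
Substituting the enveloping expansions of $M^2$ and $\f$ from Theorem \ref{thm1} and Proposition \ref{prop1}, together with the already-established expansion of $X(a,w)$ to sufficiently high order, exhibits $X'(a,w)-1$ as a finite alternating sum in even inverse powers of $w$ plus a remainder admitting a positive-kernel integral representation. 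Integrating from $w$ to $+\infty$ along a ray in the right half-plane, justified by the analytic extension and the decay $X'(a,w)-1=O(|w|^{-2})$, yields the desired formula.

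Once the Stieltjes representation is in hand, the bound follows by elementary kernel estimates: writing $w=|w|\e^{\im\theta}$ and minimising $|1+u^2\e^{-2\im\theta}|$ over $u\ge 0$ gives the lower bound $1$ when $|\theta|\le \pi/4$ and $|\sin(2\theta)|$ when $\pi/4<|\theta|<\pi/2$, which, combined with the normalisation above, yields the stated inequality. For $w>0$ the integrand is real and non-negative, so $(-1)^N R_N^{(X)}(a,w)\ge 0$ and the remainder is dominated in modulus by the first omitted term, establishing the enveloping property. The polynomial assertion about $c_n(1-a)$ is handled separately by induction on $n$ from the recurrence derived in Appendix \ref{coefficients} via reversion of the series for $\psi(a,z)-z$: this recurrence expresses $c_n(1-a)$ polynomially in $t_0(1-a),\ldots,t_n(1-a)$, and the monicity and positivity from Theorem \ref{thm2} propagate to give the claim.

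The main obstacle is the middle step: producing the Stieltjes representation with a manifestly non-negative density $\rho_N$. The alternation of signs in \eqref{fasymp}, \eqref{Masymp}, and \eqref{phiexpansion} must be tracked consistently through the composition $M^2(a,X)/(X^{a-1}\f(a,X))$ and through the subsequent integration from $w$ to $+\infty$, verifying that each intermediate expansion admits its own positive-kernel representation and that these combine with the correct overall sign. This is the delicate technical point where the parallel with Theorem \ref{thm2} is least mechanical, because here the positivity must be preserved through a nonlinear inversion rather than a direct integration of a known positive quantity.
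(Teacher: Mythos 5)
Your overall strategy---a Stieltjes-type representation for $R_N^{(X)}(a,w)$ with a nonnegative density, followed by the elementary kernel estimate---is the right shape, and your handling of the final bound, the enveloping property, and the algebraic facts about $c_n(1-a)$ would go through once such a representation is in hand. But the two steps you leave unresolved are exactly where the work lies, and the routes you sketch for both would fail. For the extension of $X(a,w)$ to $\Re(w)>0$: a Rouch\'e comparison of $\Phi(a,z)=\varphi(a,z)-\frac{\pi}{2}$ with the identity on a $D$-shaped contour breaks down on the imaginary-axis portion, because $\Phi(a,z)-z$ is only $\mathcal{O}\left(z^{-1}\right)$ at infinity, while near the origin $\Phi\left(a,s\e^{\pm\frac{\pi}{2}\im}\right)\to\frac{\pi}{4}(a+|a|-2)<0$, so $\left|\Phi(a,z)-z\right|$ does not fall below $|z-w|$ there once $|w|$ is small. (Also, $\Phi(a,\cdot)$ maps $(0,\infty)$ onto $\left(\frac{\pi}{4}(a+|a|-2),+\infty\right)$, not onto itself.) What is actually needed---and what the paper proves in Lemma \ref{lemma3}---is that $\Phi$ maps the imaginary axis injectively onto a curve lying entirely in the open left half-plane, i.e.\ $\Re\Phi\left(a,s\e^{\frac{\pi}{2}\im}\right)<0$ together with strict monotonicity of $\Im\Phi\left(a,s\e^{\frac{\pi}{2}\im}\right)$; the boundary correspondence principle then yields univalence on $\Re(z)>0$ and a range containing the closed right half-plane. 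The inequality $\Re\Phi<0$ on the imaginary axis is not free: the paper derives it from the reflection formula for $\Gamma\left(a,s\e^{\pi\im}\right)$ (Lemma \ref{lemma2}(iii)), and your proposal contains no substitute for it.

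For the density: your plan to push the enveloping expansions of $M^2$ and $\f$ through the ratio $M^2(a,X)/\left(X^{a-1}\f(a,X)\right)$, then through the composition with the unknown inverse, and finally through an integration, is precisely the step you flag as the obstacle, and it does not work as described---positive-Stieltjes structure is not preserved under ratios or nonlinear composition, and you offer no mechanism for recovering it. The paper avoids this entirely: it repeats the Cauchy-integral argument of Theorem \ref{thm2} verbatim with $X(a,w)-w$ in place of $\Phi(a,z)-z$, obtaining
\[
X(a,w)=w+\frac{1}{w}\frac{2}{\pi}\int_0^{+\infty}\frac{\Re X\left(a,s\e^{\frac{\pi}{2}\im}\right)}{1+\left(s/w\right)^2}\id s,
\]
so that $\rho_N(s;a)=\frac{2}{\pi}\,s^{2N}\,\Re X\left(a,s\e^{\frac{\pi}{2}\im}\right)$, and its nonnegativity is immediate because the range of $X$ is the half-plane $\Re(z)>0$ (Lemma \ref{lemma4}(iii)). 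In short, the positivity you need is a consequence of the conformal-mapping geometry of $\Phi$ established in the first step, not of sign-tracking through series manipulations; once that geometry is in place, the Stieltjes representation for the inverse comes from the same contour integral as for $\Phi$ itself.
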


Setting $w = \pi \kappa > 0$ in this theorem yields error bounds for the asymptotic expansion \eqref{zasymp} of the zeros $z_\kappa(a)$. In the language of enveloping series, Theorem \ref{thm3} asserts that the asymptotic expansion \eqref{zasymp} envelopes the zeros $z_\kappa(a)$ in the strict sense, meaning that successive partial sums serve as alternating upper and lower bounds.

We note that the paper \cite{LSVA2016} also provides bounds for the zeros of the generalised sine and cosine integrals $\si(a, z)$ and $\ci(a, z)$ for $a<1$. Specifically, the authors established the following bounds:
\[
\pi \kappa < z_\kappa(a) < \pi \kappa + \frac{c_0 (1 - a)}{\pi \kappa}  \cfrac{2}{1 + \sqrt{1 + \cfrac{4(1 - a)}{\left(\pi \kappa\right)^2}}},
\]
where $\kappa = k + \frac{1}{2}$ in the case of $\si(a, z)$ and $\kappa = k$ in the case of $\ci(a, z)$. These bounds can be compared with those given in Theorem \ref{thm3}.

The remainder of the paper is organised as follows. In Section \ref{sec2}, we establish Proposition \ref{prop1} and Theorem \ref{thm1}, which address the remainder terms in the asymptotic expansions of the auxiliary functions $\f(a,z)$, $\g(a,z)$, and the square of the modulus function $M(a,z)$. Sections \ref{sec3} and \ref{sec4} are dedicated to the proofs of Theorems \ref{thm2} and \ref{thm3}, which provide bounds on the remainder terms of the asymptotic expansions of the phase function $\varphi(a,z)$ and the inverse function $X(a,w)$, respectively. The paper concludes with a discussion in Section \ref{sec5}.

\section{Proof of Proposition \ref{prop1} and Theorem \ref{thm1}}\label{sec2}

In this section, we prove Proposition \ref{prop1} and Theorem \ref{thm1} concerning the remainder terms in the asymptotic expansions of the auxiliary functions $\f(a,z)$ and $\g(a,z)$, and the square of the modulus function $M(a,z)$.

\begin{proof}[Proof of Proposition \ref{prop1}] Our starting point is the integral representation
\begin{equation}\label{Gammaint}
\Gamma(a,z) = \frac{z^{a-1} \e^{-z}}{\Gamma(1-a)} \int_0^{+\infty} \frac{s^{-a} \e^{-s}}{1 + s/z} \id s
\end{equation}
which is valid for $|\arg z| < \pi$ and $\Re(a) < 1$ \cite[Eq. \href{http://dlmf.nist.gov/8.6.E4}{(8.6.4)}]{DLMF}. By applying this formula to \eqref{fdef} and \eqref{gdef}, we obtain the following representations:
\begin{equation}\label{fgrepr}
\f(a, z) = \frac{z^{a - 1}}{\Gamma(1 - a)} \int_0^{+\infty} \frac{s^{-a} \e^{-s}}{1 + \left(s/z\right)^2} \id s, \quad 
\g(a, z) = \frac{z^{a - 1}}{\Gamma(1 - a)} \frac{1}{z} \int_0^{+\infty} \frac{s^{1 - a} \e^{-s}}{1 + \left(s/z\right)^2} \id s,
\end{equation}
where $\left|\arg z\right| < \frac{\pi}{2}$, $\Re(a) < 1$ for $\f(a,z)$, and $\Re(a) < 2$ for $\g(a,z)$. Substituting the identity 
\begin{equation}\label{geom}
\frac{1}{1 + \left(s/z\right)^2 } = \sum_{n = 0}^{N - 1} \frac{( - 1)^n}{z^{2n} }s^{2N}   + \frac{( - 1)^N }{z^{2N} }\frac{s^{2N} }{1 + \left(s/z\right)^2 }
\end{equation}
into the integral representations and integrating term-by-term yields \eqref{fexp} and \eqref{gexp}, with the remainder terms given by
\begin{equation}\label{Rfeq}
R_N^{(\f)}(a,z) = (-1)^N \frac{(1 - a)_{2N}}{z^{2N}} \frac{1}{\Gamma(2N + 1 - a)} \int_0^{+\infty} \frac{s^{2N - a} \e^{-s}}{1 + \left(s/z\right)^2} \id s
\end{equation}
and
\begin{equation}\label{Rgeq}
R_N^{(\g)}(a,z) = (-1)^N \frac{(1 - a)_{2N + 1}}{z^{2N + 1}} \frac{1}{\Gamma(2N + 2 - a)} \int_0^{+\infty} \frac{s^{2N + 1 - a} \e^{-s}}{1 + \left(s/z\right)^2} \id s,
\end{equation}
respectively. By analytic continuation in $a$, \eqref{Rfeq} holds for $2N+1>\Re(a)$, while \eqref{Rgeq} holds for $2N+2>\Re(a)$. Finally, from \eqref{Pidef} and \eqref{fgrepr}, we have
 \[
\Pi _p (z) = \frac{1}{\Gamma (p)}\int_0^{ + \infty } \frac{s^{p - 1} \e^{ - s} }{1 + \left(s/z\right)^2 }\id s 
\]
where $\left|\arg z\right| < \frac{\pi}{2}$ and $\Re(p) > 0$. Using this in \eqref{Rfeq} and \eqref{Rgeq}, and lifting the restriction on $\arg z$ through analytic continuation, completes the proof.
\end{proof}

\begin{proof}[Proof of Theorem \ref{thm1}]
For each non-negative integer $N$, complex $a$, and for $|\arg z| < \pi$, we define the remainder $R_N^{(M)}(a,z)$ via \eqref{Mexp}. From \eqref{Masymp}, we have
\begin{equation}\label{RMorder}
R_N^{(M)}(a,z)=\mathcal{O}\left( z^{-2N-1}\right)
\end{equation}
as $z\to\infty$ in the sector $|\arg z|\le \pi- \delta$ ($<\pi$), uniformly with respect to bounded complex values of $a$. Differentiating \eqref{Mexp} with respect to $z$, we obtain
\begin{equation}\label{Mdiffexp}
\frac{\d}{\d z} M^2(a, z) = -2 z^{2a - 2} \left( 
\sum_{n = 0}^{N - 1} (-1)^n \frac{(1 - a)_{2n + 1}}{z^{2n + 1}} 
- \frac{(2a - 1)}{2} R_N^{(M)}(a, z) 
- \frac{z}{2} \frac{\d}{\d z} R_N^{(M)}(a, z) 
\right).
\end{equation}
Substituting \eqref{gexp} into the right-hand side of \eqref{Mdiff} and comparing with \eqref{Mdiffexp} gives the following first-order inhomogeneous equation for $R_N^{(M)}(a, z)$:
\[
\frac{2a - 1}{2} R_N^{(M)}(a, z) + \frac{z}{2} \frac{\d}{\d z} R_N^{(M)}(a, z) + R_N^{(\g)}(a, z) = 0.
\]
From \eqref{gasymp}, we obtain
\[
R_N^{(\g)}(a,z)=\mathcal{O}\left( z^{-2N-1}\right)
\]
as $z \to \infty$ in the sector $|\arg z| \leq \pi - \delta$ ($< \pi$), uniformly with respect to bounded complex values of $a$. Considering this along with \eqref{RMorder}, we deduce that
\[
R_N^{(M)} (a,z) =\frac{2}{z^{2a - 1} }\int_z^\infty  t^{2a - 2} R_N^{(\g)} (a,t)\id t= 2\int_1^{+\infty} s^{2a - 2} R_N^{(\g)} (a,zs)\id s ,
\]
provided $|\arg z| < \pi$ and $N+1 > \Re(a)$. Thus, from \eqref{Rfgexpr}, we obtain
\begin{align*}
\left| R_N^{(M)}(a, z) \right| 
& \le 2 \int_1^{+\infty} s^{2 \Re(a) - 2} \left| R_N^{(\g)}(a, zs) \right| \id s \\ & =2 \frac{\left| (1 - a)_{2N + 1} \right|}{|z|^{2N + 1}} \int_1^{+\infty} s^{2 \Re(a) - 2N - 3} \left| \Pi_{2N + 2 - a}(zs) \right| \id s \\ &
\leq 2 \frac{\left| (1 - a)_{2N + 1} \right|}{|z|^{2N + 1}} \int_1^{+\infty} s^{2 \Re(a) - 2N - 3} \id s \times \sup_{r \ge 1} \left| \Pi_{2N + 2 - a}(zr) \right|.
\\ & = \frac{\left| (1 - a)_{2N + 1} \right|}{N + 1 - \Re(a)} \frac{1}{\left| z \right|^{2N + 1}} \sup_{r \geq 1} \left| \Pi_{2N + 2 - a}(zr) \right|,
\end{align*}
which completes the proof of the bound \eqref{RMbound}.

Formula \eqref{Rfgexpr} for $R_N^{(\g)}(a,z)$ and Proposition \ref{propt1} imply that
\[
0 < ( - 1)^N R_N^{(\g)} (a,z) < \frac{(1 - a)_{2N + 1} }{z^{2N + 1}},
\]
provided $z>0$ and $2N+2>a$. Substituting this into \eqref{RMexpr}, we immediately obtain
\[
0 < ( - 1)^N R_N^{(M)} (a,z) < \frac{(1 - a)_{2N + 1} }{N + 1 - a}\frac{1}{z^{2N + 1} },
\]
under the conditions that $z > 0$ and $N+1 > a$. Therefore, when $z > 0$ and $N + 1 > a$, the remainder term $R_N^{(M)}(a,z)$ does not exceed the first omitted term in absolute value and is of the same sign.
\end{proof}

\section{Proof of Theorem \ref{thm2}}\label{sec3}
In this section, we prove Theorem \ref{thm2}, starting with the statement and proof of two lemmas.

\begin{lemma}\label{lemma1} The phase function $\varphi(a, z)$ can be analytically extended to the right half-plane $\Re(z) > 0$, and it can be continuously extended to its boundary, $\Re (z) = 0$.
\end{lemma}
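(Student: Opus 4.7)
The plan is to realise $\varphi(a,z)$ as $\Im \log \Phi(a,z)$, where
\[
\Phi(a,z) := \e^{\frac{\pi}{2}\im a}\,\Gamma(a, z\e^{-\frac{\pi}{2}\im}),
\]
so that by \eqref{Mphidef} one has $\Phi(a,z) = M(a,z)\e^{\im\varphi(a,z)}$ on $z > 0$. Since $\Gamma(a,w)$ is analytic for $|\arg w|<\pi$, the function $\Phi(a,\cdot)$ is analytic on the sector $-\pi/2 < \arg z < 3\pi/2$, in particular on $\Re z > 0$, with continuous boundary values on the imaginary axis (apart from the origin). Once $\Phi$ is shown to be non-vanishing on this closed region, the simple connectedness of the open half-plane will yield a single-valued analytic branch of $\log \Phi(a,z)$ agreeing with $\log M(a,z) + \im\varphi(a,z)$ on the positive real axis, and then $\varphi(a,z) := \Im\log \Phi(a,z)$ is the required analytic extension. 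Continuity up to $\Re z = 0$ will then follow from continuity and non-vanishing of $\Phi$ on the boundary.

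Non-vanishing on the open half-plane I would establish via the integral representation \eqref{Gammaint}. Substituted into the definition of $\Phi$, it yields, for $\Re z > 0$ and $a<1$,
\[
\Phi(a,z) = \frac{\im\,\e^{\im z}\,z^{a-1}}{\Gamma(1-a)}\int_0^{+\infty}\frac{s^{-a}\e^{-s}}{1 + \im s/z}\id s.
\]
Writing $z = x + \im y$ with $x>0$, a short calculation gives $\Im(z/(z+\im s)) = -xs/(x^2 + (y+s)^2)$, so the imaginary part of the integral equals $-x\int_0^{+\infty} s^{1-a}\e^{-s}/(x^2+(y+s)^2)\id s$, which is strictly negative for $x>0$ and $a<1$. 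Hence the integral cannot vanish, and $\Phi(a,z) \neq 0$ throughout $\Re z > 0$. The boundary case $z = \im y$ with $y>0$ is immediate: $\Phi(a, \im y) = \e^{\frac{\pi}{2}\im a}\Gamma(a,y)$ with $\Gamma(a,y) > 0$ from the standard integral for the upper incomplete gamma function.

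The genuinely non-trivial case, and the main obstacle, is the negative imaginary boundary $z = \im y$ with $y < 0$. Approaching this ray from $\Re z > 0$ corresponds to $\arg(z\e^{-\im\pi/2}) \to -\pi^+$, i.e.\ to approaching the branch cut of $\Gamma(a,\cdot)$; concretely, in the integral representation the denominator $1+\im s/z$ acquires a simple zero at $s = |y|$ which sits on the contour. I would treat this by a Sokhotski--Plemelj analysis: the limit value decomposes as a principal-value integral plus a residue contribution of the form $\im\pi\, y\,|y|^{-a}\e^{y}$, whose imaginary part is a non-zero multiple of $|y|^{1-a}\e^{y}$. This shows that $\Phi(a,\im y)$ has a non-zero boundary value, ruling out zeros on this ray. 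Once non-vanishing is in hand on all of $\{\Re z \geq 0\}\setminus\{0\}$, the analytic continuation and continuous boundary extension of $\varphi$ follow from the standard simple-connectedness argument outlined above.
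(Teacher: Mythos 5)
There is a genuine gap at the final step of your construction. Having produced a single-valued analytic branch $L(a,z)=\log\Phi(a,z)$ on $\Re(z)>0$ with $L(a,z)=\log M(a,z)+\im\varphi(a,z)$ for $z>0$, you define the extension of the phase by $\varphi(a,z):=\Im L(a,z)$. But the imaginary part of a holomorphic function is harmonic, not holomorphic: a real-valued function analytic on a domain is constant, so $\Im L$ cannot be the analytic extension demanded by the lemma (and required later in the paper, where $\varphi(a,t)-t-\frac{\pi}{2}$ is integrated over contours in $\Re(z)>0$ via Cauchy's theorem). By the identity theorem the analytic continuation of $\varphi(a,\cdot)$ off the positive real axis is unique and complex-valued, and it is not $\arg\Phi(a,z)$. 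The repair is the Schwarz-reflection device the paper uses: alongside $\Phi(a,z)=\e^{\frac{\pi}{2}\im a}\Gamma\left(a,z\e^{-\frac{\pi}{2}\im}\right)$ introduce $\Phi^{\ast}(a,z)=\overline{\Phi(a,\bar z)}=\e^{-\frac{\pi}{2}\im a}\Gamma\left(a,z\e^{\frac{\pi}{2}\im}\right)$, which is also holomorphic on $\Re(z)>0$ and equals $M(a,z)\e^{-\im\varphi(a,z)}$ for $z>0$; then
\begin{equation*}
\varphi(a,z)=\frac{1}{2\im}\log\frac{\Phi(a,z)}{\Phi^{\ast}(a,z)}
=\frac{1}{2\im}\log\left(\e^{\pi\im a}\,\frac{\Gamma\left(a,z\e^{-\frac{\pi}{2}\im}\right)}{\Gamma\left(a,z\e^{\frac{\pi}{2}\im}\right)}\right)
\end{equation*}
is a genuine holomorphic extension, the ratio being nonvanishing and the half-plane simply connected (this is \eqref{phiexpr2}).

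The rest of your argument is essentially sound and close to the paper's. Your non-vanishing computation for $\Phi$ on $\Re(z)>0$ via \eqref{Gammaint} (the sign of the imaginary part of the integral) is correct, and by reflection it gives non-vanishing of $\Phi^{\ast}$ as well, which is all the corrected construction needs; the paper gets the same conclusion by noting directly that $\Gamma(a,w)\neq 0$ for $a<1$, $|\arg w|<\pi$. Your Sokhotski--Plemelj analysis on the ray where $z\e^{-\frac{\pi}{2}\im}$ approaches the branch cut is the right idea for the boundary and is precisely what the paper reads off from the closed form \cite[Eq. (8.2.11)]{DLMF}, namely $\Im\left(\e^{\mp\pi\im a}\Gamma\left(a,s\e^{\pm\pi\im}\right)\right)=\mp\pi/\Gamma(1-a)\neq 0$; with the ratio formula you must apply this check to whichever of $\Phi$, $\Phi^{\ast}$ lands on the cut on each of the two boundary rays. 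Finally, you do not address continuity at the origin, which the paper verifies separately from the small-$z$ behaviour of $\Gamma(a,z)$ to confirm the limit $\frac{\pi}{4}(a+|a|)$; this is a minor omission but should be included.
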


\begin{proof} Suppose $z > 0$. Using the definition of $\varphi(a, z)$ and the Schwarz reflection principle, we obtain the following representation:
\begin{equation}\label{phiexpr}
\e^{2\im \varphi(a, z)} = \frac{M(a, z) \e^{\im \varphi(a, z)}}{M(a, z) \e^{-\im  \varphi(a, z)}} = \frac{\e^{\frac{\pi}{2} \im a} \Gamma \left( a, z \e^{-\frac{\pi}{2} \im } \right)}{\overline{\e^{\frac{\pi}{2} \im  a} \Gamma \left( a, z \e^{-\frac{\pi}{2} \im } \right)}} = \e^{\pi \im a} \frac{\Gamma \left( a, z \e^{-\frac{\pi}{2} \im } \right)}{\Gamma \left( a, z \e^{\frac{\pi}{2} \im } \right)}.
\end{equation}
From \eqref{Gammaint}, we deduce that $\Gamma(a, z) \neq 0$ for $a < 1$ and $|\arg z| < \pi$. Therefore, the rightmost fraction in \eqref{phiexpr} has a holomorphic logarithm in the (simply connected) domain $\Re(z) > 0$. Thus,
\begin{equation}\label{phiexpr2}
\varphi(a,z) = \frac{1}{2\im} \log\left(\e^{\pi \im a} \frac{\Gamma\left(a, z\e^{- \frac{\pi}{2}\im}\right)}{\Gamma\left(a, z\e^{\frac{\pi}{2}\im}\right)}\right)
\end{equation}
yields the required holomorphic extension of the phase function $\varphi(a,z)$ to the right
half-plane $\Re (z) > 0$. Here, $\log$ denotes the principal branch of the logarithm. The branch has been chosen to match the limiting value of $\varphi(a, z)$ as $z \to 0$ (cf. \eqref{philimit} below). Furthermore, by \cite[Eq. \href{http://dlmf.nist.gov/8.2.E10}{(8.2.10)}]{DLMF},
\[
\Im\left(\e^{ \mp \pi \im a} \Gamma \left( a,s\e^{ \pm \pi \im}  \right) \right) =  \mp \frac{\pi}{\Gamma (1 - a)} \neq 0
\]
for any $a < 1$ and $s > 0$. Thus, \eqref{phiexpr2} defines a continuous function of $z$ for $a < 1$, $\Re(z) \geq 0$, and $z \neq 0$. To verify the continuity at the origin, we use the known behaviour of $\Gamma(a, z)$ near $z = 0$ (see \cite[Eqs. \href{http://dlmf.nist.gov/8.4.E15}{(8.4.15)} and \href{http://dlmf.nist.gov/8.7.E3}{(8.7.3)}]{DLMF}), which yields
\begin{equation}\label{philimit}
\lim_{z\to 0^+}\frac{1}{2\im} \log\left(\e^{\pi \im a} \frac{\Gamma\left(a, z\e^{- \frac{\pi}{2}\im}\right)}{\Gamma\left(a, z\e^{\frac{\pi}{2}\im}\right)}\right) =\frac{1}{2\im} \log \left( \e^{\pi \im a} \e^{\frac{\pi}{2} \im \left( |a| - a \right)} \right) = \tfrac{\pi}{4} \left( a + |a| \right),
\end{equation}
which is consistent with the definition of $\varphi(a, z)$.
\end{proof}

At this point, it is convenient to introduce the function $\Phi(a, z)$ defined by
\begin{equation}\label{Phidef}
\Phi (a,z)=\varphi(a,z)-\frac{\pi}{2}.
\end{equation}
Clearly, this function inherits the continuity and analyticity properties of $\varphi(a, z)$ as stated in Lemma \ref{lemma1}. In particular, we have
\begin{equation}\label{Phidef2}
\Phi(a,z) = \frac{1}{2\im} \log\left(\e^{\pi \im (a-1)} \frac{\Gamma\left(a, z\e^{- \frac{\pi}{2}\im}\right)}{\Gamma\left(a, z\e^{\frac{\pi}{2}\im}\right)}\right)
\end{equation}
for $|\arg z| \leq \frac{\pi}{2}$ and $a < 1$.

\begin{lemma}\label{lemma2} The function $\Phi(a, z)$ possesses the following properties:

\begin{enumerate}[(i)]\setlength{\itemsep}{5pt}
\item $\Phi(a, z) = z + \mathcal{O}\left(z^{-1}\right)$ as $z \to \infty$ in the closed sector $|\arg z| \leq \frac{\pi}{2}$.
\item $\Re \Phi\left(a, s \e^{\frac{\pi}{2}\im}\right) = o\left(s^{-r}\right)$ as $s \to +\infty$ for any $r > 0$.
\item $\Re \Phi\left(a, s \e^{\frac{\pi}{2}\im}\right) < 0$ for any $s > 0$.
\end{enumerate}
\end{lemma}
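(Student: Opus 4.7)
Part (i) follows directly from the asymptotic expansion \eqref{phiasymp}. Since that expansion holds uniformly on every closed subsector $|\arg z|\le \pi - \delta$ of $|\arg z|<\pi$, I would pick any $\delta\in(0,\pi/2)$, whose corresponding sector contains the closed right half-plane sector $|\arg z|\le \pi/2$. Truncating \eqref{phiasymp} after the leading term $z+\pi/2$ then yields $\varphi(a,z) = z + \pi/2 + \mathcal{O}(z^{-1})$, and subtracting $\pi/2$ gives (i).

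For parts (ii) and (iii), the plan is to specialise the representation \eqref{Phidef2} to $z=\im s$ with $s>0$. Since $z\e^{-\pi\im/2}=s$ and $z\e^{\pi\im/2}=s\e^{\pi\im}$, this gives
\[
\Phi(a,\im s) = \frac{1}{2\im}\log W(s), \qquad W(s) := -\frac{\e^{\pi\im a}\Gamma(a,s)}{\Gamma(a,s\e^{\pi\im})},
\]
so that $\Re\Phi(a,\im s)=\tfrac{1}{2}\arg W(s)$. The next step is to isolate the imaginary part of the denominator using the identity $\Im\!\bigl(\e^{-\pi\im a}\Gamma(a,s\e^{\pi\im})\bigr)=-\pi/\Gamma(1-a)$ already exploited in the proof of Lemma \ref{lemma1}. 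Writing $\Gamma(a,s\e^{\pi\im})=\e^{\pi\im a}\bigl(U(s)-\im\pi/\Gamma(1-a)\bigr)$ with real $U(s):=\Re\!\bigl(\e^{-\pi\im a}\Gamma(a,s\e^{\pi\im})\bigr)$ and rationalising produces
\[
W(s) = -\frac{\Gamma(a,s)\bigl(U(s)+\im\pi/\Gamma(1-a)\bigr)}{U(s)^2+\pi^2/\Gamma(1-a)^2}.
\]

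Because $\Gamma(a,s)>0$ and $\Gamma(1-a)>0$ for $a<1$ and $s>0$, the imaginary part of $W(s)$ is strictly negative, so on the principal branch $\arg W(s)\in(-\pi,0)$, and this gives $\Re\Phi(a,\im s)<0$, proving (iii). For (ii), the standard large-argument asymptotic of the incomplete gamma function on the continued negative axis gives $U(s)=-s^{a-1}\e^{s}\bigl(1+\mathcal{O}(s^{-1})\bigr)$ as $s\to+\infty$, so $|U(s)|$ grows exponentially and $\Re W(s)>0$ for all sufficiently large $s$, placing $W(s)$ in the open fourth quadrant. Using $|\arg W(s)|\le|\Im W(s)|/|\Re W(s)|=\pi/(|U(s)|\Gamma(1-a))=\mathcal{O}(s^{1-a}\e^{-s})$, one sees that $|\Re\Phi(a,\im s)|$ decays faster than any inverse power of $s$, which is exactly (ii).

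The only point that requires some care is the sign control that places $\arg W(s)$ in $(-\pi,0)$ on the principal branch; this is guaranteed by the strict negativity of $\Im W(s)$, which in turn comes for free from the DLMF reflection identity. Everything else reduces to bookkeeping around the exponential contrast between $\Gamma(a,s)\sim s^{a-1}\e^{-s}$ on the positive real axis and $\Gamma(a,s\e^{\pi\im})\sim s^{a-1}\e^{\pi\im(a-1)}\e^{s}$ on the continued negative real axis.
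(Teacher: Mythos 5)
Your argument is correct, and for parts (i) and (iii) it coincides with the paper's own proof: (i) is read off from \eqref{phiasymp}, and (iii) rests on exactly the same inputs — the reflection identity $\Im\bigl(\e^{-\pi \im a}\Gamma(a,s\e^{\pi\im})\bigr)=-\pi/\Gamma(1-a)$ together with the positivity of $\Gamma(a,s)$ and $\Gamma(1-a)$ — so that the quantity inside the logarithm has imaginary part of fixed sign and its principal argument lies in a half-plane; your rationalised form of $W(s)$ is just an algebraic repackaging of the paper's use of \cite[Eq. (8.2.11)]{DLMF} and $\gamma^\ast$. Where you genuinely diverge is part (ii). The paper gets it "for free" from parity: on the ray $\arg z=\frac{\pi}{2}$ every term of the expansion \eqref{phiasymp} — namely $\im s$ and the powers $(\im s)^{-(2n+1)}$ — is purely imaginary, so $\Re\Phi\bigl(a,s\e^{\frac{\pi}{2}\im}\bigr)$ equals the real part of the $N$th remainder and is therefore $\mathcal{O}(s^{-2N-1})$ for every $N$. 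You instead estimate $\arg W(s)$ directly via $|\arg W|\le|\Im W|/|\Re W|=\pi/(\Gamma(1-a)|U(s)|)$ once $\Re W(s)>0$, using $U(s)\sim-s^{a-1}\e^{s}$. This is more work — it needs the large-argument asymptotics of $\Gamma(a,z)$ on the continued ray $\arg z=\pi$ and the quadrant control that makes $|\arg W|\le\tan|\arg W|$ legitimate — but it buys a strictly stronger conclusion: exponential decay $\mathcal{O}(s^{1-a}\e^{-s})$ rather than mere super-algebraic decay. Since only $o(s^{-r})$ is used downstream (to justify term-by-term integration in the proof of Theorem \ref{thm2}), both suffice; the paper's route is softer and reuses machinery already in hand, while yours quantifies the decay sharply.
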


\begin{proof} (i) follows immediately from the asymptotic expansion \eqref{phiasymp} and the definition \eqref{Phidef}.

From \eqref{phiasymp} and \eqref{Phidef}, for any $N \geq 0$, we obtain the following:
\begin{align*}
  \Re \Phi\left(a, s \e^{\frac{\pi}{2}\im}\right)& = \Re \left( \Phi\left(a, s \e^{\frac{\pi}{2}\im}\right)-\im s + \sum_{n=0}^{N-1} (-1)^n \frac{t_n(1 - a)}{2n + 1} \frac{1}{(\im s)^{2n + 1}} \right) \\ &= \Re \mathcal{O}\left( \frac{1}{s^{2N+1}} \right) = \mathcal{O}\left( \frac{1}{s^{2N+1}} \right)  
\end{align*}
as $s \to +\infty$, confirming claim (ii).

To prove claim (iii), we proceed as follows. From \eqref{Phidef2}, we have
\begin{equation}\label{Phiim}
\Phi\left(a, s\e^{\frac{\pi}{2}\im}\right) = -\frac{1}{2\im} \log\left( \e^{-\pi \im(a - 1)} \frac{\Gamma\left(a, s\e^{\pi \im}\right)}{\Gamma(a, s)} \right)
\end{equation}
for $s>0$. From \cite[Eq. \href{http://dlmf.nist.gov/8.2.E11}{(8.2.11)}]{DLMF}, we know that
\begin{equation}\label{Gammaexpr}
\e^{-\pi \im (a - 1)} \Gamma\left(a, s\e^{\pi \im}\right) = \Gamma(a)\left(-\cos(\pi a) + s^a \gamma^\ast(a, -s)\right) + \frac{\pi \im}{\Gamma(1 - a)},
\end{equation}
where
\[
\gamma ^\ast (a,z) = z^{ - a} \left( 1 - \frac{\Gamma (a,z)}{\Gamma (a)} \right)
\]
is an entire function of both $a$ and $z$, which is real when both $a$ and $z$ are real. When $a = -n$ for some $n \in \mathbb{Z}_{\geq 0}$, the right-hand side of \eqref{Gammaexpr} is interpreted as its limiting value:
\[
\frac{1}{n!} \left( \log(s\e^{\pi \im}) - \psi(n+1) \right) + 
\sum_{\substack{k=0 \\ k \neq n}}^{\infty} 
\frac{s^{k-n}}{k!(k-n)}
\]
(cf. \cite[Eq. \href{http://dlmf.nist.gov/8.4.E15}{(8.4.15)}]{DLMF}). Consequently, we have
\[
\Re \Phi\left(a, s\e^{\frac{\pi}{2}\im}\right) = -\frac{1}{2} \arg\left( \frac{\Gamma(a)\left(-\cos(\pi a) + s^a \gamma^\ast(a, -s)\right)}{\Gamma(a, s)} + \frac{\pi \im}{\Gamma(1 - a)\Gamma(a, s)} \right).
\]
Claim (iii) follows immediately, since the term $\frac{\pi}{\Gamma(1 - a) \Gamma(a, s)}$ is positive for $a < 1$ and $s > 0$.
\end{proof}

\begin{figure}[ht!]
    \centering
    \includegraphics[width=0.25\textwidth]{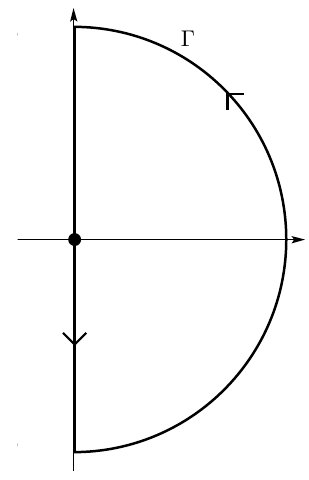} 
    \caption{The integration contour $\Gamma$.}
    \label{figure1}
\end{figure}

\begin{proof}[Proof of Theorem \ref{thm2}] Let $\Gamma$ be the positively oriented, $D$-shaped closed contour depicted in Figure \ref{figure1}. By Cauchy's integral theorem in its most general form \cite[Chap. 2, \S 2.3, Theorem 2.4]{Kodaira2007}, we have the following representation:
\[
\varphi (a, z) - z - \frac{\pi}{2} = \Phi(a,z) - z = \frac{1}{2\pi \im} \oint_\Gamma \frac{\Phi(a,t) - t}{t - z} \id t - \underbrace{\frac{1}{2\pi \im} \oint_\Gamma \frac{\Phi(a,t) - t}{t + z} \id t}_0
\]
for all $z > 0$ lying inside the contour $\Gamma$. From part (i) of Lemma \ref{lemma2}, it follows that as the radius of the semicircular portion of the contour $\Gamma$ tends to infinity, the integrals along it approach zero. Therefore, we have
\begin{align*}
\varphi(a, z) - z - \frac{\pi}{2} = \; & 
\frac{1}{2\pi \im} \int_{+\im\infty}^0 \frac{\Phi(a,t) - t}{t - z} \id t 
+ \frac{1}{2\pi \im} \int_0^{-\im\infty} \frac{\Phi(a,t) - t}{t - z} \id t \\
& - \frac{1}{2\pi \im} \int_{+\im\infty}^0 \frac{\Phi(a,t) - t}{t + z} \id t 
- \frac{1}{2\pi \im} \int_0^{-\im\infty} \frac{\Phi(a,t) - t}{t + z} \id t \\
= \; & - \frac{1}{2\pi} \int_0^{+\infty} \frac{\Phi\left(a,s\e^{\frac{\pi}{2} \im}\right) - \im s}{\im s - z} \id s 
+ \frac{1}{2\pi} \int_0^{+\infty} \frac{\Phi\left(a,s\e^{-\frac{\pi}{2} \im}\right) + \im s}{\im s + z} \id s \\
& + \frac{1}{2\pi} \int_0^{+\infty} \frac{\Phi\left(a,s\e^{\frac{\pi}{2} \im}\right) - \im s}{\im s + z} \id s 
- \frac{1}{2\pi} \int_0^{+\infty} \frac{\Phi\left(a,s\e^{-\frac{\pi}{2} \im}\right) + \im s}{\im s - z} \id s \\
= \; & \frac{1}{z}\frac{2}{\pi} \int_0^{+\infty} \frac{\Re \Phi\left(a,s\e^{\frac{\pi}{2} \im}\right)}{1 + \left(s/z\right)^2} \id s.
\end{align*}
In the final step, we used the fact that
\[
2\Re \Phi\left(a,s\e^{\frac{\pi}{2} \im}\right)=\Phi\left(a,s\e^{\frac{\pi}{2} \im}\right)+\Phi\left(a,s\e^{-\frac{\pi}{2} \im}\right)
\]
for all $s > 0$, which follows from the Schwarz reflection principle. The restriction on $z$ can now be removed via analytic continuation. Thus, we obtain
\begin{equation}\label{phiint}
\varphi(a, z)= z+ \frac{\pi}{2} +\frac{1}{z}\frac{2}{\pi} \int_0^{+\infty} \frac{\Re \Phi\left(a,s\e^{\frac{\pi}{2} \im}\right)}{1 + \left(s/z\right)^2} \id s
\end{equation}
provided that $\Re(z)>0$. Note that, by part (ii) of Lemma \ref{lemma2}, $\Re \Phi\left(a,s\e^{\frac{\pi}{2} \im}\right)$ decays faster than any negative power of $s$ as $s \to +\infty$. Therefore, for any positive integer $N$, with $\Re(z) > 0$ and $s > 0$, we can expand the integrand in \eqref{phiint} using \eqref{geom} and term-by-term integration, to deduce the truncated expansion \eqref{phiexpansion} with
\begin{equation}\label{tcoeffint}
t_n(1 - a) = - \frac{4n + 2}{\pi} \int_0^{+\infty} s^{2n} \Re \Phi\left(a,s\e^{\frac{\pi}{2} \im}\right) \id s
\end{equation}
and
\begin{equation}\label{Rphiint}
R_N^{(\varphi)}(a, z) = (-1)^N \frac{1}{z^{2N + 1}} \frac{2}{\pi} \int_0^{+\infty} \frac{s^{2N} \Re \Phi\left(a,s\e^{\frac{\pi}{2} \im}\right)}{1 + \left(s/z\right)^2} \id s.
\end{equation}
Formula \eqref{tcoeffint} and part (iii) of Lemma \ref{lemma2} together imply that $t_n(1 - a) > 0$. The fact that $t_n(1 - a)$ is a monic polynomial in $1 - a$ of degree $2n + 1$ with integer coefficients is shown in Appendix \ref{coefficients}.

It remains to prove the claimed bounds for the remainder term $R_N^{(\varphi)}(a, z)$. To do so, we combine the elementary inequality
\[
\left| 1 + u^2 \right| \geq \begin{cases} 1 & \text{if }\; \left|\arg u\right| \leq \frac{\pi}{4}, \\ |\sin (2\arg u)| & \text{if }\; \frac{\pi}{4} < \left|\arg u\right| < \frac{\pi}{2}, \end{cases}
\]
with the fact that $\Re \Phi\left(a,s\e^{\frac{\pi}{2} \im}\right)$ does not change sign for $s > 0$ (as stated in part (iii) of Lemma \ref{lemma2}), and use formula \eqref{tcoeffint} (with $N$ in place of $n$) to deduce
\begin{align*}
\left| R_N^{(\varphi)}(a, z) \right| &\le \frac{1}{\left| z \right|^{2N + 1}} \frac{2}{\pi} \int_0^{+\infty} \frac{s^{2N} \left| \Re \Phi\left(a,s\e^{\frac{\pi}{2} \im}\right) \right|}{\left| 1 + \left( s/z \right)^2 \right|} \id s \\
&= \frac{1}{\left| z \right|^{2N + 1}} \frac{2}{\pi} \left| \int_0^{+\infty} \frac{s^{2N} \Re \Phi\left(a,s\e^{\frac{\pi}{2} \im}\right)}{\left| 1 + \left(s/z \right)^2 \right|} \id s \right| \\
&\le \frac{1}{\left| z \right|^{2N + 1}} \frac{2}{\pi} \left| \int_0^{+\infty} s^{2N} \Re \Phi\left(a,s\e^{\frac{\pi}{2} \im}\right) \id s \right| \times \begin{cases} 
1 & \text{if }\; |\arg z| \leq \frac{\pi}{4}, \\
\left| \csc(2 \arg z) \right| & \text{if }\; \frac{\pi}{4} < |\arg z| < \frac{\pi}{2}
\end{cases} \\
&= \frac{t_N(1 - a)}{2N + 1} \frac{1}{\left| z \right|^{2N + 1}} \times \begin{cases} 
1 & \text{if }\; |\arg z| \leq \frac{\pi}{4}, \\
\left| \csc(2 \arg z) \right| & \text{if }\; \frac{\pi}{4} < |\arg z| < \frac{\pi}{2}.
\end{cases}
\end{align*}
It follows directly from part (iii) of Lemma \ref{lemma2}, \eqref{tcoeffint}, and \eqref{Rphiint} that, for $z > 0$, the remainder term $R_N^{(\varphi)}(a, z)$ and $(-1)^{N+1} \frac{t_N(1-a)}{2N+1} \frac{1}{z^{2N + 1}}$ have the same sign.
\end{proof}

\section{Proof of Theorem \ref{thm3}}\label{sec4}
In this section, we establish Theorem \ref{thm3}. We begin by stating and proving two lemmas.

\begin{lemma}\label{lemma3} The function $w = \Phi(a, z)$, with $\Re(z) > 0$, is injective, and its range includes the closed right half-plane $\Re(w) \geq 0$.
\end{lemma}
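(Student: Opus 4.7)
The plan is to apply Cauchy's argument principle on a $D$-shaped contour $\Gamma_R$ in the right half-plane---namely the imaginary-axis segment from $iR$ down to $-iR$ followed by the semicircle $|z|=R$, $\Re(z)\geq 0$, oriented so that the right half-disk lies to the left---and count zeros of $\Phi(a,z)-w_0$ inside $\Gamma_R$ by tracking the change in argument of the image.

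First I would analyse the image of $\Gamma_R$. On the semicircular piece, Lemma~\ref{lemma2}(i) gives $\Phi(a,z)=z+\mathcal{O}(z^{-1})$, so for large $R$ and bounded $w_0$ the image is a small perturbation of the right-hand semicircle of radius $R$, contributing a change in argument of $+\pi$. On the imaginary-axis piece, Lemma~\ref{lemma2}(iii) (combined with Schwarz reflection and the boundary value $\Phi(a,0)=\tfrac{\pi}{4}(a+|a|)-\tfrac{\pi}{2}<0$) forces the image into the closed left half-plane, while parts~(i) and~(ii) of Lemma~\ref{lemma2} pin the endpoints asymptotically to $\pm iR$. For $w_0$ with $\Re(w_0)>0$, the difference $\Phi(a,is)-w_0$ then has strictly negative real part throughout, its argument remains in $(\pi/2,3\pi/2)$, and it sweeps from near $\pi/2$ at $s=R$, through $\pi$ at $s=0$, to near $3\pi/2$ at $s=-R$, a change of $+\pi$. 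The total change along $\Gamma_R$ is $2\pi$, so exactly one zero of $\Phi(a,z)-w_0$ lies inside $\Gamma_R$; letting $R\to\infty$ places every $w_0$ with $\Re(w_0)>0$ in the range, with a unique preimage in $\{\Re(z)>0\}$. The boundary line $\Re(w_0)=0$ is reached by continuity: approaching $w_0=i\tau$ along $w_n$ with $\Re(w_n)>0$, the unique preimages $z_n$ cannot diverge (since $\Phi(a,z)\sim z$) nor accumulate on $\{\Re(z)=0\}$ (since $\Re\Phi(a,is)<0$ there while $\Re w_0=0$), so a cluster point lies in $\{\Re(z)>0\}$ and maps to $w_0$.

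The main obstacle is to promote the winding-number count to full injectivity on the open right half-plane, rather than just on the preimage of $\{\Re(w)\geq 0\}$. Two distinct preimages of some common $w_0$ would yield at least two zeros inside a sufficiently large $\Gamma_R$---a contradiction as soon as $\Phi(\Gamma_R)$ is known to be a Jordan curve, since winding numbers of a Jordan curve lie in $\{0,\pm 1\}$. The non-trivial step is the injectivity of $s\mapsto\Phi(a,is)$ on the imaginary axis, which I would establish by showing strict monotonicity of $\Im\Phi(a,is)$ in $s$, via differentiation of the defining representation \eqref{Phidef2} and a positivity argument mirroring \eqref{phidiff} and the integral representations \eqref{fgrepr}. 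Once $\Phi(\Gamma_R)$ is a Jordan curve, the argument principle yields at most one preimage for every $w_0\in\mathbb{C}$, completing the proof of injectivity on $\{\Re(z)>0\}$.
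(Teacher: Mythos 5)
Your outline is sound and rests on exactly the same analytic ingredients as the paper's proof: the asymptotics $\Phi(a,z)=z+\mathcal{O}(z^{-1})$, the sign condition $\Re\Phi\left(a,s\e^{\frac{\pi}{2}\im}\right)<0$ together with the negative boundary value at $z=0$, and--as the decisive step for injectivity--the strict monotonicity of $\Im\Phi\left(a,s\e^{\frac{\pi}{2}\im}\right)$ in $s$. The paper establishes that monotonicity by writing $\Im\Phi\left(a,s\e^{\frac{\pi}{2}\im}\right)$ explicitly as $\tfrac{1}{4}\log$ of a sum of two squares via \eqref{Phiim} and \eqref{Gammaexpr} and differentiating, which is likely cleaner than your proposed route through \eqref{phidiff} and \eqref{fgrepr}, but either should work. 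Where the two arguments genuinely diverge is in the packaging of the topological step: you run the argument principle on truncated $D$-shaped contours $\Gamma_R$, whereas the paper first transplants the half-plane to a disc by $\zeta=1/(1+z)$, checks that $\Phi$ is actually analytic (not merely continuous) across the imaginary axis away from the origin by appealing to the zero-free behaviour of the confluent hypergeometric function $U$, and then invokes a boundary correspondence principle for the circle. The paper's compactification means the only boundary curve in play is the image $\mathcal{G}_a$ of the imaginary axis (plus the point at infinity, where $\Phi^\ast$ has a simple pole), so injectivity of $\Phi$ on the imaginary axis alone suffices.

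This points to the one real gap in your proposal. For your Jordan-curve argument you need $\Phi(\Gamma_R)$ to be simple, and that requires more than injectivity of $s\mapsto\Phi(a,\im s)$: you must also show that $\Phi$ is injective on the semicircular arc and, more delicately, that the image of the arc does not meet the image of the diameter except at the two corner points $\pm\im R$. Neither is automatic. Injectivity on the arc for large $R$ follows from the differentiated expansion $\partial_z\Phi(a,z)=1+\mathcal{O}(z^{-2})$ (valid in $|\arg z|\le\pi-\delta$, hence across $\arg z=\pm\frac{\pi}{2}$), but the disjointness near the corners needs an explicit argument, since both image arcs enter the strip $-C/R<\Re(w)<0$ with imaginary parts close to $\pm R$ there; one has to rule out a crossing by comparing moduli and then using the local univalence coming from $\partial_z\Phi\approx 1$ on a neighbourhood of $\pm\im R$. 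This is fixable but is a genuine missing step, not a triviality, and it is precisely the complication the paper's M\"obius-map-plus-boundary-correspondence route is designed to avoid. Your surjectivity count for $\Re(w_0)>0$ and the limiting argument for $\Re(w_0)=0$ are fine as stated (in fact the winding-number count already applies directly when $\Re(w_0)=0$, since $\Re\Phi(a,\im s)<0$ keeps the contour image away from such $w_0$).
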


\begin{proof} From \eqref{Phiim}, we can deduce that
\[
\Im \Phi\left(a, s\e^{\frac{\pi}{2}\im}\right) = \frac{1}{4} \log\left( \left( \frac{\Gamma(a)\left(-\cos(\pi a) + s^a \gamma^\ast(a, -s)\right)}{\Gamma(a, s)} \right)^2 + \left( \frac{\pi}{\Gamma(1 - a)\Gamma(a, s)} \right)^2 \right).
\]
for all $s>0$. Next,
\[
\frac{\d}{\d s} \left(\Gamma(a)\left(-\cos(\pi a) + s^a \gamma^\ast(a, -s)\right)\right) = s^{a-1} \e^s
\;\text{ and }\;
\frac{\d \Gamma (a,s)}{\d s} =  - s^{a - 1} \e^{ - s} .
\]
Thus, for each fixed $a$, $\Im \Phi\left(a, s \e^{\frac{\pi}{2} \im}\right)$ is a strictly monotonically increasing function of $s$. Since $\lim_{s \to 0^+} \Im \Phi\left(a, s \e^{\frac{\pi}{2} \im}\right) = 0$, we conclude that $\Im \Phi\left(a, s \e^{\frac{\pi}{2} \im}\right) > 0$ for all $s > 0$. By Lemma \ref{lemma2}, we also have $\Re \Phi\left(a, s\e^{\frac{\pi}{2}\im}\right) < 0$ for $s > 0$. Applying the Schwarz reflection principle, we obtain
\[
\Phi\left(a, s\e^{-\frac{\pi}{2}\im}\right) =\Re \Phi\left(a, s\e^{\frac{\pi}{2}\im}\right)-\im \Im \Phi\left(a, s\e^{\frac{\pi}{2}\im}\right)
\]
for any $s>0$. Therefore, the function $\Phi(a, z)$ is injective on the imaginary axis, mapping it onto an infinite curve $\mathcal{G}_a$ that lies entirely in the left half-plane and is symmetric with respect to the real axis. The curve $\mathcal{G}_a$ crosses the negative real axis at the point
\[
\lim_{s \to 0^+} \Phi\left(a, s\e^{\pm \frac{\pi}{2}\im}\right) = \lim_{s \to 0^+} \varphi\left(a, s\e^{\pm \frac{\pi}{2}\im}\right) - \frac{\pi}{2} = \tfrac{\pi}{4}(a + |a| - 2).
\]
To continue, we require that $\Phi(a, z)$ is not only continuous but also analytic on the imaginary axis, except at the origin. To establish this, observe that
\[
\Gamma (a,z) = \e^{ - z} U(1 - a,1 - a,z),
\]
where the confluent hypergeometric function $U$ has finitely many zeros in the sector $|\arg z| < \frac{5\pi}{4}$ \cite[\href{http://dlmf.nist.gov/13.9.ii}{\S13.9(ii)}]{DLMF}. Thus, for any fixed $a < 1$, the ratio
\[
\frac{\Gamma\left(a, z\e^{- \frac{\pi}{2}\im}\right)}{\Gamma\left(a, z\e^{\frac{\pi}{2}\im}\right)}
\]
is well-defined and non-zero near $\arg z = \pm \frac{\pi}{2}$. Consequently, \eqref{phiexpr2} defines a holomorphic function of $z$ in a neighbourhood of the rays $\arg z = \pm \frac{\pi}{2}$. The mapping
\[
\zeta  = \zeta (z) = \frac{1}{1 + z}
\]
defines a bijection between the imaginary axis and the circle
\[
\mathcal{C}=\left\{ \zeta :\left| \zeta  - \tfrac{1}{2} \right| = \tfrac{1}{2} \right\},
\]
as well as between the open right half-plane and the interior $I(\mathcal{C})$ of $\mathcal{C}$. Define the function
\[
\Phi^\ast (a,\zeta ) = \Phi\left(a, \tfrac{1}{\zeta } - 1  \right)
\]
on the closure of $I(\mathcal{C})$. Note that $\Phi^\ast(a, \zeta)$ is analytic on the closure of $I(\mathcal{C})$, except at the points $\zeta = 0$ and $\zeta = 1$. It is continuous at $\zeta = 1$, and at $\zeta = 0$, it is of infinitely large order 1:
\[
\lim_{\zeta  \to 0} \zeta \Phi^\ast  (a,\zeta ) = 1
\]
(this follows from the asymptotic behaviour of $\Phi(a, z)$). Since $\Phi(a, z)$ is injective on the imaginary axis, $\Phi^\ast(a, \zeta)$ is injective on $\mathcal{C}$. Thus, by a general version of the boundary correspondence principle \cite[Chap. 4, \S18, Theorem 4.9]{Markushevich1965}, $\Phi^\ast(a, \zeta)$ is injective on the interior of $\mathcal{C}$. The image of the interior is a domain with boundary $\mathcal{G}_a$, which corresponds to the region on the left of an observer moving along the path $w = \Phi^\ast(a, \zeta)$ as $\zeta$ traverses $\mathcal{C}$ once in the positive direction. By the properties of $\mathcal{G}_a$, this domain includes the closed half-plane $\Re(w) \geq 0$ (see Figure \ref{figure2}).
\end{proof}

\begin{figure}[!ht]
    \centering
    \includegraphics[width=0.5\textwidth]{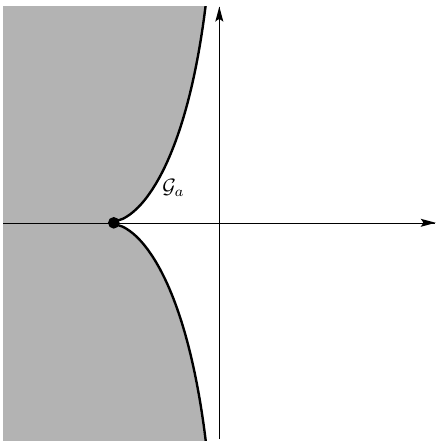} 
    \caption{The range of the function $w = \Phi(a,z)$ (unshaded), bounded by the curve $\mathcal{G}_a$, which intersects the real axis at $w = \frac{\pi}{4}(a + |a| - 2)$.}
    \label{figure2}
\end{figure}

Since $\Phi(a,z)$ is holomorphic and injective in the domain $\Re(z) > 0$, it has a holomorphic inverse defined on its range. This inverse must be identical to the function $X(a,w)$ introduced in Section \ref{sec1}. Therefore, the function $z = X(a,w)$ extends analytically to a domain that includes the closed half-plane $\Re(w) \geq 0$, with its range being the region $\Re(z) > 0$.

\begin{lemma}\label{lemma4} The function $X(a, w)$ possesses the following properties:

\begin{enumerate}[(i)]\setlength{\itemsep}{5pt}
\item $X(a, w) = w + \mathcal{O}\left(w^{-1}\right)$ as $w \to \infty$ in the closed sector $|\arg w| \leq \frac{\pi}{2}$.
\item $\Re X\left(a, s \e^{\frac{\pi}{2}\im}\right) = o\left(s^{-r}\right)$ as $s \to +\infty$ for any $r > 0$.
\item $\Re X\left(a, s \e^{\frac{\pi}{2}\im}\right) > 0$ for any $s > 0$.
\end{enumerate}
\end{lemma}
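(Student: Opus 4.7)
The three properties admit fairly direct proofs once we exploit the fact that $X(a,\cdot)$ inverts $\Phi(a,\cdot)$, together with the structural information accumulated in Lemmas \ref{lemma2} and \ref{lemma3}. My plan is to dispatch (iii) first as an immediate corollary of Lemma \ref{lemma3}, then handle (i) by inverting the leading asymptotic of $\Phi$, and finally derive (ii) by invoking the asymptotic expansion \eqref{Xasymp} already obtained via Olver's reversion theorems.

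For (iii), Lemma \ref{lemma3} shows that $\Phi(a,\cdot)$ is a holomorphic bijection from the open right half-plane onto a domain whose boundary $\mathcal{G}_a$ lies entirely in the open left half-plane, crossing the real axis at $\tfrac{\pi}{4}(a+|a|-2)<0$. Consequently, every $w$ with $\Re(w)\ge 0$ is an interior point of the range, and the inverse $X(a,\cdot)$ is holomorphic there, mapping into the open right half-plane $\{\Re(z)>0\}$. Taking $w=\im s$ with $s>0$ immediately yields $\Re X(a,\im s)>0$.

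For (i), I would invert the estimate in part (i) of Lemma \ref{lemma2}. Writing $z=X(a,w)$, the identity $w=\Phi(a,z)=z+\mathcal{O}(1/z)$ (uniform in the closed right $z$-half-plane for $|z|$ large) gives the two-sided bound $|z|/2\le|w|\le 2|z|$ once $|z|$ is sufficiently large. As $w\to\infty$ in $|\arg w|\le\pi/2$, the corresponding $z$ must tend to infinity as well, since otherwise $\Phi(a,z)$ would remain bounded. The estimate $|z|\le 2|w|$ then yields
\[
|X(a,w)-w|=|\Phi(a,z)-z|=\mathcal{O}(1/|z|)=\mathcal{O}(1/|w|),
\]
which is exactly claim (i).

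For (ii), the cleanest route is through the expansion \eqref{Xasymp}. Since that expansion contains only odd negative powers of $w$, substituting $w=\im s$ and using $1/(\im s)^{2n+1}=-\im(-1)^n/s^{2n+1}$ turns every term into a purely imaginary quantity, so that
\[
X(a,\im s)\sim \im s - \im\sum_{n=0}^{\infty}\frac{c_n(1-a)}{(2n+1)\,s^{2n+1}}.
\]
Truncating at order $N$ yields $\Re X(a,\im s)=\mathcal{O}(s^{-2N-1})$ as $s\to+\infty$, and because $N$ is arbitrary this is $o(s^{-r})$ for every $r>0$. The main obstacle is essentially bookkeeping: one must check that $w=\im s$ sits in the interior of the range of $\Phi$ (so that $X$ is genuinely holomorphic there) and that the sector $|\arg w|\le\pi-\delta$ in \eqref{Xasymp} covers the positive imaginary axis — both points are handled immediately by Lemma \ref{lemma3} and by choosing $\delta<\pi/2$.
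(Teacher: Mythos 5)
Your proposal is correct and follows essentially the same route as the paper: part (iii) from the domain/range description of the inverse established in Lemma \ref{lemma3}, and part (ii) by observing that the terms of \eqref{Xasymp} are purely imaginary on the positive imaginary axis so that $\Re X\left(a,s\e^{\frac{\pi}{2}\im}\right)$ is $\mathcal{O}\left(s^{-2N-1}\right)$ for every $N$. The only (harmless) deviation is in part (i), where you re-derive the leading behaviour by inverting Lemma \ref{lemma2}(i) instead of simply reading it off from \eqref{Xasymp} with $\delta<\frac{\pi}{2}$, as the paper does.
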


\begin{proof} (i) follows immediately from the asymptotic expansion \eqref{Xasymp}.

From \eqref{Xasymp}, for any $N \geq 0$, we obtain the following:
\begin{align*}
  \Re X\left(a, s \e^{\frac{\pi}{2}\im}\right)& = \Re \left( X\left(a, s \e^{\frac{\pi}{2}\im}\right)-\im s - \sum_{n=0}^{N-1} (-1)^n \frac{c_n(1 - a)}{2n + 1} \frac{1}{(\im s)^{2n + 1}} \right) \\ &= \Re \mathcal{O}\left( \frac{1}{s^{2N+1}} \right) = \mathcal{O}\left( \frac{1}{s^{2N+1}} \right)  
\end{align*}
as $s \to +\infty$, confirming claim (ii).

Since the domain of the function $X(a,w)$ includes the imaginary axis and its range is the right half-plane, claim (iii) follows immediately.
\end{proof}

\begin{proof}[Proof of Theorem \ref{thm3}] The proof follows in exactly the same way as the proof of Theorem \ref{thm2}, so we only present the resulting integral formulas, which may be of independent interest. By following the initial steps in the proof of Theorem \ref{thm2} and referring to Lemma \ref{lemma4}, we obtain the expression
\[
X(a, w)= w+ \frac{1}{w}\frac{2}{\pi} \int_0^{+\infty} \frac{\Re X\left(a,s\e^{\frac{\pi}{2} \im}\right)}{1 + \left(s/w\right)^2} \id s
\]
for $\Re(w)>0$. Expanding the integrand using \eqref{geom} (with $w$ in place of $z$), we recover the truncated expansion \eqref{Xtrunc}, with the coefficients given by
\begin{equation}\label{ccoeffint}
c_n(1 - a) = \frac{4n + 2}{\pi} \int_0^{+\infty} s^{2n} \Re X\left(a,s\e^{\frac{\pi}{2} \im}\right) \id s
\end{equation}
and the remainder term
\begin{equation}\label{Xint}
R_N^{(X)}(a, w) = (-1)^N \frac{1}{w^{2N + 1}} \frac{2}{\pi} \int_0^{+\infty} \frac{s^{2N} \Re X\left(a,s\e^{\frac{\pi}{2} \im}\right)}{1 + \left(s/w\right)^2} \id s.
\end{equation}
Formula \eqref{ccoeffint} and part (iii) of Lemma \ref{lemma4} together imply that $c_n(1 - a) > 0$. The fact that $c_n(1 - a)$ is a monic polynomial in $1 - a$ of degree $2n + 1$ with rational coefficients is shown in Appendix \ref{coefficients}. To estimate the remainder $R_N^{(X)}(a, w)$, we can proceed in the same manner as for $R_N^{(\varphi)}(a, z)$.
\end{proof}

\section{Discussion}\label{sec5}

In this paper, we investigated the asymptotic behaviour of the generalised trigonometric integral $ \ti(a, z, \alpha) $ and its associated modulus and phase functions for large complex values of $ z $. We derived asymptotic expansions for these functions, accompanied by explicit and computable error bounds. Special attention was given to the case where $ a $ is real and $ a < 1 $, where we showed that $ \ti(a, z, \alpha) $ has infinitely many positive real zeros. For these large zeros, we established asymptotic expansions with precise error estimates. Furthermore, we demonstrated that the asymptotic expansions for real variables are enveloping, meaning that successive partial sums of the expansions provide upper and lower bounds for the corresponding functions.

If $a > 1$, the function $\ti(a,z,\alpha)$ still has infinitely many positive real zeros. Indeed, from equation \eqref{tfg} and the asymptotic behaviour of the functions $\f(a,z)$ and $\g(a,z)$, we can conclude that
\[
z^{1 - a} \ti(a,z,\alpha) =  - \sin (z - \pi \alpha) + \mathcal{O}(z^{-1}),
\]
which shows that $\ti(a,z,\alpha)$, as a function of $z$, changes sign infinitely often for all real values of $a$. Starting with equation \eqref{tfg} and the asymptotic expansions of $\f(a,z)$ and $\g(a,z)$, and using a procedure similar to the one in \cite[\S3]{Olver1999}, we establish that the asymptotic expansion \eqref{zasymp} remains valid for the large zeros when $a > 1$ as well. However, obtaining error bounds for this expansion in this case appears to be challenging. For example, it is unclear whether the modulus function $M(a,z)$ remains non-zero for all $a > 1$ and $z > 0$, which is necessary for the phase function to be defined for all $a>1$ and $z > 0$. While numerical experiments suggest this to be the case, $M(a,z)$ can take very small values for certain combinations of $a$ and $z$, indicating that a rigorous proof might be difficult to obtain. Moreover, we found that when $\varphi(a,z)$ is well-defined for some $a > 1$ and $z > 0$, it is not necessarily injective and, therefore, not invertible for all $z > 0$.

The asymptotic expansions discussed in this paper are, in general, divergent. However, with a more sophisticated analysis, it can be shown that, for example, for large $\kappa$, the terms of the asymptotic series \eqref{zasymp} initially decrease in magnitude, reach a minimum, and then begin to diverge. Optimal truncation of the series (i.e., truncating at or near the numerically smallest term), given by $n \approx \frac{\pi}{2}\kappa$, results in an approximation whose error is exponentially small in the asymptotic variable $\kappa$. Further exponential accuracy can be achieved beyond the least term by re-expanding the remainder term $R_N^{(X)}(a,w)$ using its integral representation \eqref{Xint} (see, e.g., \cite{Berry1991}). In this context, an Euler transformation approach proves particularly effective (cf. \cite[\S3]{Boyd1990}).

As a final remark, we note that the results of this paper can be used to derive asymptotic expansions for the Fresnel integrals. Specifically, using the notation of \cite[\href{http://dlmf.nist.gov/7.2.iii}{\S7.2(iii)}]{DLMF}, we have:
\begin{align*}
\mathscr{F}(z) & = \int_z^\infty \e^{\frac{\pi}{2}\im t^2} \id t = \frac{1}{\sqrt{2\pi}} \ci\left(\tfrac{1}{2}, \tfrac{\pi}{2}z^2\right) + \frac{\im}{\sqrt{2\pi}} \si\left(\tfrac{1}{2}, \tfrac{\pi}{2}z^2\right),
\\ S(z) &= \int_0^z \sin\left(\tfrac{\pi}{2}t^2\right) \id t = \frac{1}{2} - \frac{1}{\sqrt{2\pi}} \si\left(\tfrac{1}{2}, \tfrac{\pi}{2}z^2\right),
\\ C(z) &= \int_0^z \cos\left(\tfrac{\pi}{2}t^2\right) \id t = \frac{1}{2} - \frac{1}{\sqrt{2\pi}} \ci\left(\tfrac{1}{2}, \tfrac{\pi}{2}z^2\right),
\end{align*}
for any complex $z$. However, it is clear that the asymptotic behaviour of the large (complex) zeros of $S(z)$ and $C(z)$ cannot be derived from the results presented in this paper.

\appendix

\section{Properties and computation of the coefficients}\label{coefficients}

In this appendix, we demonstrate how the coefficients $t_n(x)$ and $c_n(x)$ can be computed using explicit recursive formulae. Furthermore, we prove that both $t_n(x)$ and $c_n(x)$ are monic polynomials in $x$ of degree $2n+1$, with integer coefficients for $t_n(x)$ and rational coefficients for $c_n(x)$.

We begin by proving the formula \eqref{phidiff} for the $z$-derivative of the phase function $\varphi(a, z)$. Assume that $a < 1$ and $z > 0$. From \eqref{Mphidef}, \eqref{fdef}, and \eqref{gdef}, we can deduce that
\[
M(a,z)\e^{\im\varphi (a,z)}  =
\e^{\im z} (\g(a,z) + \im\f(a,z)).
\]
Since $a < 1$, it follows that none of the functions $M(a,z)$, $\f(a,z)$, or $\g(a,z)$ is zero. Taking the logarithmic derivative of both sides yields
\[
\cfrac{\cfrac{\d M(a,z)}{\d z}}{M(a,z)} + \im \frac{\d \varphi(a,z)}{\d z} = \im + \cfrac{\cfrac{\d \g(a,z)}{\d z} + \im \cfrac{\d \f(a,z)}{\d z}}{\g(a,z) + \im \f(a,z)}.
\]
Taking the imaginary part of each side and applying \eqref{M2} gives
\[
\frac{\d \varphi(a,z)}{\d z} = 1 + \cfrac{\g(a,z)\cfrac{\d \f(a,z)}{\d z}  - \f(a,z)\cfrac{\d \g(a,z)}{\d z} }{M^2(a,z)},
\]
which, using \eqref{fgdiff} and \eqref{M2}, simplifies to \eqref{phidiff}.

To derive a recurrence relation for the coefficients $t_n(x)$, we can use formula \eqref{phidiff}. Since $\varphi(a, z)$ extends analytically to the half-plane $\Re(z) > 0$, we can differentiate its asymptotic expansion term-by-term, yielding
\begin{equation}\label{phidasymp}
\frac{\d \varphi(a,z)}{\d z} \sim 1 + \sum_{n=0}^\infty (-1)^n \frac{t_n(1-a)}{z^{2 n + 2}}
\end{equation}
as $z \to +\infty$, with $a$ held fixed. We then rewrite \eqref{phidiff} as
\[
M^2 (a,z)\frac{\d}{\d z}\varphi (a,z) = \f(a,z)
\]
and substitute using \eqref{Masymp}, \eqref{phidasymp}, and \eqref{fasymp}. By setting $x = 1-a$, we obtain the following relation between formal series:
\[
\left( \sum_{n=0}^\infty (-1)^n \frac{(x)_{2n+1}}{n+x} \frac{1}{z^{2n}} \right) 
\left( 1 + \sum_{n=0}^\infty (-1)^n \frac{t_n(x)}{z^{2n+2}} \right) 
= \sum_{n=0}^\infty (-1)^n \frac{(x)_{2n}}{z^{2n}}.
\]
Multiplying the two series on the left-hand side and equating like powers of $z$, we obtain the following recurrence relation for the coefficients:
\[
t_0(x) = x, \quad t_n(x) = \frac{n+1}{n + x+1}(x)_{2n+2} -  \sum_{k=1}^{n} \frac{(x)_{2k + 1}}{k + x}  t_{n - k}(x)
\]
for all $n \geq 1$. Since $(x)_{2n+2}$ is divisible by $n+x+1$ and $(x)_{2k+1}$ is divisible by $k+x$, a simple induction argument shows that each $t_n(x)$ is a polynomial with integer coefficients. Moreover, it follows by induction that the leading term of $t_n(x)$ is $x^{2n+1}$. The explicit forms of the first few $t_n(x)$ are provided in Table \ref{table1}. We conjecture that each $t_n(x)$ has only positive coefficients.

\begin{table*}[!ht]
\caption{The coefficients $t_n(x)$ for $0\leq n \leq 5$.}
\begin{center}
\begin{tabular}
[c]{l l}  \hline
& \\ [-2ex]
 $n$ & $t_n(x)$ \\ [-2ex]
& \\ \hline 
 & \\ [-1.5ex]
 0 & $x$\\ [1ex]
 1 & $x^3 + 6x^2 + 6x$\\ [1ex]
 2 & $x^5 + 20x^4 + 110x^3 + 210x^2 + 120x$\\[1ex]
 3 & $x^7 + 42x^6 + 560x^5 + 3248x^4 + 8946x^3 + 11256x^2 + 5040x$\\[1ex]
 4 & $x^9 + 72x^8 + 1764x^7 + 20580x^6 + 129834x^5 + 463050x^4+ 920184x^3$\\[1ex]
& $ + 930960x^2 + 362880x$\\[1ex]
 5 & $x^{11} + 110x^{10} + 4290x^9 + 83688x^8 + 939774x^7 + 6494092x^6$ \\[1ex]
 & $+ 28332282x^5 + 77504328x^4 + 127178832x^3 + 112289760x^2 + 39916800x$\\[-1.5ex]
 & \\\hline
\end{tabular}
\end{center}
\label{table1}
\end{table*}

A recurrence scheme for calculating the coefficients $c_n(x)$ can be derived as follows. Note that the asymptotic series \eqref{Xasymp} for $X(a,w)$ is obtained by formally inverting the asymptotic series of $\Phi(a,z)$. Therefore, by a result of Fabijonas and Olver \cite[Theorem 2.4]{Olver1999}, $(-1)^{n+1} c_n(x)$ is the coefficient of $z^{-1}$ in the asymptotic expansion of $(\Phi(1-x,z))^{2n+1}$, or equivalently, the coefficient of $z^{-2n-2}$ in the asymptotic expansion of $\left(z^{-1} \Phi(1-x,z)\right)^{2n+1}$. Let us define, for each positive integer $n$, the coefficient of $z^{-2k}$ in the expansion of $\left( z^{-1} \Phi(1-x,z) \right)^{2n+1}$ as $(-1)^k d_{n,k}(x)$, that is
\[
\left( 1 - \sum_{k=0}^\infty (-1)^k \frac{t_k(x)}{2k+1} \frac{1}{z^{2k+2}} \right)^{2n+1} = \sum\limits_{k = 0}^\infty(-1)^k  \frac{d_{n,k} (x)}{z^{2k} }.
\]
With this notation, we can express $c_n(x)$ as
\[
c_n (x) = d_{n,n + 1} (x).
\]
By an exercise of Olver \cite[Chap. 1, \S8, Exer. 8.4]{Olver1997}, the coefficients $d_{n,k}(x)$ can be computed using the recurrence relation
\begin{equation}\label{drec}
d_{n,1}(x) = (2n+1) x, \quad d_{n,k}(x) = \frac{2n+1}{2k-1} t_{k-1}(x) + \frac{1}{k} \sum_{j=1}^{k-1} \frac{2j(n+1) - k}{2j-1} t_{j-1}(x) d_{n,k-j}(x)
\end{equation}
for all $n \geq 0$ and $k \geq 2$. A simple induction argument shows that each $d_{n,k}(x)$ is a polynomial with rational coefficients. Furthermore, by induction, it follows that the leading term of $d_{n,k}(x)$ is $\frac{2n+1}{2k-1} x^{2k-1}$. As a result, the leading term of $c_n(x)$ is $x^{2n+1}$. The explicit forms of the first few $c_n(x)$ are given in Table \ref{table2}. We conjecture that each $c_n(x)$ has only positive coefficients, which would follow from the recurrence \eqref{drec} if the corresponding conjecture for $t_n(x)$ holds true.

\begin{table*}[!ht]
\caption{The coefficients $c_n(x)$ for $0\leq n \leq 5$.}
\begin{center}
\begin{tabular}
[c]{l l}  \hline
& \\ [-2ex]
 $n$ & $c_n(x)$ \\ [-2ex]
& \\ \hline 
 & \\ [-1.5ex]
 0 & $x$\\ [2.75ex]
 1 & $x^3 + 9x^2 + 6x$\\ [1ex]
 2 & $x^5 + \cfrac{80}{3}x^4 + 160x^3 + 250x^2 + 120x$\\[1ex]
 3 & $x^7 + \cfrac{791}{15}x^6 + 791x^5 + 4529x^4 + 11088x^3 + 12348x^2 + 5040x$\\[1ex]
 4 & $x^9 + \cfrac{3048}{35}x^8 + \cfrac{11996}{5}x^7 + \cfrac{144924}{5}x^6 + 176016x^5 + 578466x^4 
$\\[1ex]
& $+ 1052520x^3 + 986256x^2 + 362880x$\\[1ex]
 5 & $x^{11} + \cfrac{40843}{315}x^{10} + \cfrac{356092}{63}x^9 + \cfrac{2439712}{21}x^8 + \cfrac{3907442}{3}x^7 + 8635462x^6$ \\[1ex]
 & $+ 35393952x^5 + \cfrac{271612924}{3}x^4 + 139585512x^3 + 116915040x^2 + 39916800x$\\[-1.5ex]
 & \\\hline
\end{tabular}
\end{center}
\label{table2}
\end{table*}

\section{Bounds for the basic terminant}\label{terminant}

In this appendix, we present estimates for the absolute value of the basic terminant $\Pi_p(z)$, assuming $\Re(p) > 0$. These estimates depend solely on $p$ and the argument $\arg z$ of $z$, and hence they also provide bounds for the quantity $\sup_{r \geq 1} \left| \Pi_p(zr) \right|$, which appears in Theorem \ref{thm1}. For detailed proofs, we refer the reader to \cite{Nemes2017a} and \cite{Nemes2017b}.

\begin{proposition}\label{propt1}
For any complex number $p$ with $\Re(p) > 0$, the following inequality holds:
\begin{equation}\label{eqPi}
\left| {\Pi _p (z)} \right| \le \frac{\Gamma (\Re (p))}{\left| \Gamma (p) \right|} \times \begin{cases} 1 & \text{if }\; \left|\arg z\right| \leq \frac{\pi}{4}, \\ \left|\csc ( 2\arg z)\right| & \text{if }\; \frac{\pi}{4} < \left|\arg z\right| < \frac{\pi}{2}. \end{cases}
\end{equation}
Furthermore, when both $z$ and $p$ are positive, it holds that $0 < \Pi_p(z) < 1$.
\end{proposition}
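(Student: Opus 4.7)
The plan is to start from the integral representation
\[
\Pi_p(z) = \frac{1}{\Gamma(p)}\int_0^{+\infty}\frac{s^{p-1}\e^{-s}}{1 + \left(s/z\right)^2}\id s,
\]
valid for $\Re(p)>0$ and $\left|\arg z\right|<\frac{\pi}{2}$, as already established in the proof of Proposition \ref{prop1}. Taking absolute values inside the integral yields
\[
\left|\Pi_p(z)\right| \le \frac{1}{\left|\Gamma(p)\right|}\int_0^{+\infty}\frac{s^{\Re(p)-1}\e^{-s}}{\left|1+\left(s/z\right)^2\right|}\id s,
\]
so the essential task reduces to bounding $\left|1+(s/z)^2\right|$ from below uniformly in $s>0$.

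The next step is a purely geometric observation. Since $s>0$, the complex number $(s/z)^2$ lies on the ray from the origin with argument $-2\arg z$. If $\left|\arg z\right|\le\frac{\pi}{4}$, this ray lies in the closed right half-plane, so $\Re((s/z)^2)\ge 0$ and hence $\left|1+(s/z)^2\right|\ge 1$. If $\frac{\pi}{4}<\left|\arg z\right|<\frac{\pi}{2}$, the ray passes through the left half-plane, and the distance from the point $-1$ to this ray is $\left|\sin(2\arg z)\right|$, yielding $\left|1+(s/z)^2\right|\ge\left|\sin(2\arg z)\right|$. This is exactly the elementary inequality for $\left|1+u^2\right|$ already exploited in the proof of Theorem \ref{thm2}. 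In either case the lower bound is independent of $s$, so it can be pulled outside the integral. The remaining integral $\int_0^{+\infty}s^{\Re(p)-1}\e^{-s}\id s$ evaluates to $\Gamma(\Re(p))$, producing the claimed estimate \eqref{eqPi}.

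For the second assertion, we restrict to positive real $z$ and $p$. The integrand $s^{p-1}\e^{-s}/(1+(s/z)^2)$ is then a positive continuous function on $(0,+\infty)$, which immediately gives $\Pi_p(z)>0$. Moreover, the factor $1/(1+(s/z)^2)$ is strictly less than $1$ on this interval, so
\[
\Pi_p(z) < \frac{1}{\Gamma(p)}\int_0^{+\infty}s^{p-1}\e^{-s}\id s = 1,
\]
completing the argument.

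No serious obstacle is anticipated: everything reduces to the integral representation of Proposition \ref{prop1} together with the elementary lower bound for $\left|1+(s/z)^2\right|$. The only point requiring a touch of care is the geometric identification of the distance from $-1$ to the ray of argument $-2\arg z$, which is precisely where the transition between the two cases of \eqref{eqPi} originates.
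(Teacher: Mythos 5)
Your proof is correct, and it is essentially the standard argument: the paper itself does not prove Proposition \ref{propt1} but defers to \cite{Nemes2017a} and \cite{Nemes2017b}, where the bounds are obtained in just this way, from the integral representation of $\Pi_p(z)$ (established here in the proof of Proposition \ref{prop1}) combined with the elementary lower bound for $\left|1+(s/z)^2\right|$ that the paper also invokes in the proof of Theorem \ref{thm2}. Both the case analysis on $\arg z$ and the strict inequalities $0<\Pi_p(z)<1$ for $z,p>0$ are handled correctly.
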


We remark that it was shown by the author in \cite{Nemes2017a} that $\left| {\Pi _p (z)} \right| \le \sqrt {\frac{\e}{4}\left( {p + \frac{3}{2}} \right)}$ provided that $\frac{\pi}{4} < \left| \arg z \right| \leq \frac{\pi}{2}$ and $p$ is real and positive. This result improves upon the bound in \eqref{eqPi} near $|\arg z| = \frac{\pi}{2}$.

\begin{proposition} For any complex number $p$ with $\Re(p) > 0$, the following inequalities hold:
\[
\left| \Pi_p (z) \right| \le \frac{1}{2}\sec ^{\Re(p)} (\arg z)\max \left(1,\e^{\Im (p)\left(  \mp \frac{\pi }{2} - \arg z \right)} \right) + \frac{1}{2}\max \left(1,\e^{\Im (p)\left(  \pm \frac{\pi }{2} - \arg z \right)} \right)
\]
and
\[
\left| \Pi_p (z) \right| \le \frac{1}{2}\sec ^{\Re(p)} (\arg z)\max \left(1,\e^{\Im (p)\left(  \mp \frac{\pi }{2} - \arg z \right)} \right) + \frac{\Gamma (\Re (p))}{2\left| \Gamma (p) \right|},
\]
for $0 \le \pm\arg z < \frac{\pi }{2}$.
\end{proposition}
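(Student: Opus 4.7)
My plan is to split $\Pi_p(z)$ into the two pieces corresponding to the two incomplete gamma functions embedded in $\f(1-p,z)$. Using $z\e^{\mp\frac{\pi}{2}\im} = \mp\im z$ together with the branch-consistent identity $z^p \e^{\pm \frac{\pi}{2}\im p} = (\pm\im z)^p$ (valid for $|\arg z| < \frac{\pi}{2}$ with principal branches), definitions \eqref{Pidef} and \eqref{fdef} yield
\[
\Pi_p(z) = \tfrac{1}{2}(T_1 + T_2), \quad T_1 = (\im z)^p \e^{\im z}\Gamma(1-p, \im z), \quad T_2 = (-\im z)^p \e^{-\im z}\Gamma(1-p, -\im z).
\]
Both claimed inequalities share the same bound on $T_1$, which accounts for the common term $\frac{1}{2}\sec^{\Re p}(\arg z)\max(1,\e^{\Im p(\mp\pi/2-\arg z)})$, and differ only in the bound chosen for $T_2$. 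By the Schwarz reflection relation $\Pi_p(\bar z) = \overline{\Pi_{\bar p}(z)}$, it suffices to handle the upper-sign case $0 \le \arg z < \frac{\pi}{2}$; the lower-sign case then follows by conjugation. Set $\theta = \arg z$ and $r = |z|$.

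To estimate $T_1$ I would deform the path of integration in $\Gamma(1-p, \im z) = \int_{\im z}^\infty t^{-p}\e^{-t}\id t$ to the horizontal ray $t = \im z + u$, $u \ge 0$. This is legitimate because the ray lies entirely in the upper half-plane and so avoids the branch cut of $t^{-p}$ on the negative real axis, while $\e^{-t}$ supplies convergence at infinity. After cancellations this gives
\[
T_1 = \int_0^\infty \left(1 - \tfrac{\im u}{z}\right)^{-p} \e^{-u}\id u.
\]
The key geometric observation is that, as $u$ runs over $[0,\infty)$, the point $1-\im u/z$ traces a straight ray starting at $1$ in direction $\e^{\im(-\pi/2-\theta)}$, giving $\min_u |1-\im u/z| = \cos\theta$ (attained at $u = r\sin\theta$) and $\arg(1-\im u/z) \in (-\pi/2-\theta, 0]$. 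Writing $|(1-\im u/z)^{-p}| = |1-\im u/z|^{-\Re p}\e^{\Im p\cdot\arg(1-\im u/z)}$ and maximising each factor separately yields $|T_1| \le \sec^{\Re p}\theta \cdot \max(1,\e^{\Im p(-\pi/2-\theta)})$ after using $\int_0^\infty \e^{-u}\id u = 1$.

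For $T_2$ I would use two different representations. For the first inequality I apply $\Gamma(1-p,w) = w^{1-p}\e^{-w}\int_0^\infty (1+u)^{-p}\e^{-wu}\id u$ (valid for $\Re w > 0$, which holds since $\Re(-\im z) = r\sin\theta \ge 0$), which collapses $T_2$ to $-\im z \int_0^\infty (1+u)^{-p}\e^{\im zu}\id u$. Rotating the contour to $u = s\e^{\im(\pi/2-\theta)}$ is admissible (the rotation stays in the upper half-plane, away from the branch point at $u = -1$) and has the happy effect $\e^{\im zu} = \e^{-rs}$; an argument parallel to that used for $T_1$ gives $|(1 + s\e^{\im\psi})^{-p}| \le \max(1,\e^{\Im p\psi})$ with $\psi = \pi/2-\theta$, and the prefactor $|{-\im z}| = r$ cancels exactly against $\int_0^\infty \e^{-rs}\id s = 1/r$, producing the $|z|$-independent bound $|T_2| \le \max(1,\e^{\Im p(\pi/2-\theta)})$. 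For the second inequality I would instead use \eqref{Gammaint} to rewrite $T_2 = \frac{1}{\Gamma(p)}\int_0^\infty \frac{s^{p-1}\e^{-s}}{1-s/(\im z)}\id s$; a direct geometric check gives $|1-s/(\im z)| \ge 1$ for $s \ge 0$ and $\theta \in [0, \pi/2)$, whence $|T_2| \le \Gamma(\Re p)/|\Gamma(p)|$. The delicate part of the plan is locating the correct contour deformations: the horizontal ray from $\im z$ (rather than any ray through the origin) is what extracts the $\sec^{\Re p}$ factor via the minimum-distance calculation, and the specific rotation angle $\pi/2-\theta$ used for the $T_2$ integral is precisely the one that forces the $r$-dependence to drop out, as required by the $|z|$-independent form of the proposition.
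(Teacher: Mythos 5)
The paper itself contains no proof of this proposition: Appendix B explicitly defers all of the terminant bounds to \cite{Nemes2017a} and \cite{Nemes2017b}, so there is no internal argument to compare against. Your proof is correct and self-contained, and it follows the same strategy as those references: the decomposition $\Pi_p(z)=\tfrac{1}{2}(T_1+T_2)$ with $T_{1,2}=(\pm\im z)^p\e^{\pm\im z}\Gamma(1-p,\pm\im z)$ is precisely the partial-fraction splitting of the kernel $\left(1+(s/z)^2\right)^{-1}$ in the integral representation of $\Pi_p$, the horizontal-ray deformation from $\im z$ correctly extracts the $\sec^{\Re(p)}(\arg z)$ factor through the minimum-distance computation $\min_u|1-\im u/z|=\cos(\arg z)$, and the rotation through $\tfrac{\pi}{2}-\arg z$ (respectively the bound $|1-s/(\im z)|\ge 1$ applied to \eqref{Gammaint}) yields the two alternative $|z|$-independent estimates for $T_2$. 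The only point worth tightening is the boundary case $\arg z=0$ in the $T_2$ rotation, where $\Re(-\im z)=0$ and the Laplace-type integral is only conditionally convergent for $0<\Re(p)\le 1$; a Jordan-lemma estimate on the connecting arcs (or continuity in $\arg z$) disposes of this, so it is a technicality rather than a gap.
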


The following estimate holds for positive real values of the order $p$.

\begin{proposition}  
For any $p > 0$ and $z$ with $\frac{\pi}{4} < \left| \arg z \right| < \pi$, we have  
\begin{equation}\label{eq86}  
\left| \Pi_p(z) \right| \leq \frac{\left| \csc\left( 2(\arg z - \theta) \right) \right|}{\cos^p \theta},  
\end{equation}  
where $\theta$ is the unique solution of the implicit equation  
\[
\left( p + 2 \right)\cos\left( 2\arg z - 3\theta \right) = \left( p - 2 \right)\cos\left( 2\arg z - \theta \right),
\]  
and satisfies $0 < \theta  <  - \frac{\pi}{4} + \arg z$ if $\frac{\pi}{4} < \arg z  < \frac{\pi}{2}$, $ - \frac{\pi}{2}  + \arg z  < \theta  < -\frac{\pi}{4}+\arg z$ if $\frac{\pi}{2}  \le \arg z  < \frac{3\pi}{4}$, $ - \frac{\pi}{2}  + \arg z  < \theta  < \frac{\pi }{2}$ if $\frac{3\pi}{4}  \le \arg z  < \pi$, $\frac{\pi}{4} + \arg z < \theta  <  0$ if $-\frac{\pi }{2} < \arg z  < -\frac{\pi}{4}$, $\frac{\pi}{4}  + \arg z  < \theta  < \frac{\pi}{2}+\arg z$ if $-\frac{3\pi}{4}  < \arg z  \le -\frac{\pi}{2}$ and $ - \frac{\pi}{2}  < \theta  < \frac{\pi }{2}+ \arg z$ if $-\pi < \arg z \le -\frac{3\pi}{4}$.
\end{proposition}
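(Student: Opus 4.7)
The plan is to begin with the integral representation
\[
\Pi_p(z) = \frac{1}{\Gamma(p)} \int_0^{+\infty} \frac{s^{p-1} \e^{-s}}{1 + (s/z)^2} \id s \qquad \bigl(|\arg z| < \tfrac{\pi}{2}\bigr),
\]
derived in the proof of Proposition \ref{prop1}, and to rotate the path of integration onto the ray $\arg s = \theta$. Provided the sector swept during this rotation contains neither of the poles $s = \pm \im z$ of the integrand and the contribution of a large connecting arc vanishes, Cauchy's theorem yields $\Pi_p(z) = \frac{\e^{\im p\theta}}{\Gamma(p)} \int_0^{\infty} r^{p-1} \e^{-r\e^{\im\theta}}\bigl(1 + (r/|z|)^2 \e^{2\im(\theta - \arg z)}\bigr)^{-1} \id r$. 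This same formula furnishes the analytic continuation of $\Pi_p(z)$ to $|\arg z| \ge \frac{\pi}{2}$, provided a $\theta$ with $\cos\theta > 0$ and avoiding the poles exists; this is what will cover the full range $\frac{\pi}{4} < |\arg z| < \pi$ of the proposition.

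The core of the argument is a uniform lower bound on the denominator along the rotated contour. Since $p > 0$, taking absolute values inside the integral reduces matters to estimating $\min_{u\ge 0}|1 + u\e^{\im\alpha}|$ with $\alpha := 2(\theta - \arg z)$; a direct calculation with $1 + 2u\cos\alpha + u^2$ shows that the minimum is $|\sin\alpha|$, attained at $u = -\cos\alpha$, provided $\cos\alpha \le 0$. Combined with $\int_0^\infty r^{p-1}\e^{-r\cos\theta}\id r = \Gamma(p)/\cos^p\theta$, this delivers the clean estimate $|\Pi_p(z)| \le |\csc(2(\arg z - \theta))|/\cos^p\theta$ subject to the constraints $\cos\theta > 0$ and $\cos(2(\theta - \arg z)) \le 0$. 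To identify the sharpest choice of $\theta$, I would set the logarithmic derivative of the right-hand side to zero, obtaining $2\cos(2\arg z - 2\theta)\cos\theta + p\sin\theta\sin(2\arg z - 2\theta) = 0$, and then apply the product-to-sum identities with $A = 2\arg z - 2\theta$ and $B = \theta$ to collapse this precisely to the stated implicit equation $(p+2)\cos(2\arg z - 3\theta) = (p-2)\cos(2\arg z - \theta)$.

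The hardest part, and the main obstacle I anticipate, is the six-case verification that this transcendental equation admits a \emph{unique} root $\theta$ in each of the prescribed subintervals of $\arg z$, and that this root simultaneously satisfies (i) $\cos\theta > 0$, (ii) $\cos(2(\theta - \arg z)) \le 0$ (so that the pivotal lower bound $|\sin\alpha|$ applies), and (iii) the geometric conditions ensuring that the rotated-contour representation remains valid. I would carry this out by introducing $h(\theta) := (p+2)\cos(2\arg z - 3\theta) - (p-2)\cos(2\arg z - \theta)$, inspecting the signs of $h$ at the endpoints of each candidate interval, establishing monotonicity within it from $h'(\theta)$ (with $p \in (0,2]$ and $p > 2$ treated separately, since the sign of the coefficient $p-2$ flips), and exploiting the reflection symmetry $\arg z \mapsto -\arg z$, $\theta \mapsto -\theta$ to halve the number of cases.
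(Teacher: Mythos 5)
The paper itself offers no proof of this proposition: Appendix \ref{terminant} explicitly defers all proofs to \cite{Nemes2017a} and \cite{Nemes2017b}. Your reconstruction is precisely the method used there: rotate the path in $\Pi _p (z) = \frac{1}{\Gamma (p)}\int_0^{ + \infty } s^{p - 1} \e^{ - s} \left(1 + \left(s/z\right)^2\right)^{-1}\id s$ onto the ray $\arg s=\theta$, bound the denominator below by $\min_{u\ge 0}\left|1+u\e^{\im\alpha}\right|=\left|\sin\alpha\right|$ with $\alpha=2(\theta-\arg z)$ when $\cos\alpha\le 0$, integrate the remaining modulus to get $\Gamma(p)/\cos^p\theta$, and choose $\theta$ to minimise the resulting bound. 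Your Euler--Lagrange computation is correct: the stationarity condition $2\cos(2\arg z-2\theta)\cos\theta+p\sin\theta\sin(2\arg z-2\theta)=0$ collapses under the product-to-sum identities exactly to $(p+2)\cos(2\arg z-3\theta)=(p-2)\cos(2\arg z-\theta)$, and the stated $\theta$-intervals are consistent with the three constraints you identify (they force $\theta\in(0,\tfrac{\pi}{2})$ and $\arg z-\theta\in(\tfrac{\pi}{4},\tfrac{\pi}{2})$ for $\arg z>0$, so $\cos\theta>0$, $\cos(2(\arg z-\theta))<0$, and the poles at $\arg s=\arg z\pm\tfrac{\pi}{2}$ are never crossed). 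The one piece you have planned but not executed is the endpoint-sign and monotonicity analysis establishing existence and uniqueness of the root in each of the six subcases; this is the genuinely laborious part of the argument, but your outline (including the reflection symmetry $\arg z\mapsto-\arg z$, $\theta\mapsto-\theta$, which legitimately halves the work) is the right way to complete it.
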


We remark that the value of $\theta$ in this proposition is chosen to minimise the right-hand side of inequality \eqref{eq86}.

\begin{proposition} For any complex $p$ with $\Re(p) > 0$, we have the following estimates:
\begin{align*}
\left| \Pi _p (z) \right| \le \; & \frac{1}{2}  + \frac{\left| p \right|}{2 {\Re (p)} }\Gamma \left( \frac{\Re (p)}{2} + 1 \right) \mathbf{F}\!\left( \frac{1}{2},\frac{\Re (p)}{2};\frac{\Re (p)}{2} + 1;\sin ^2 (\arg z) \right)\max \left(1,\e^{ - \Im (p)\arg z} \right) \\ & + \frac{1}{2}\max \left(1,\e^{ \Im (p)\left( { \pm\frac{\pi }{2} - \arg z} \right)} \right)
\\  \le \; & \frac{1}{2} + \frac{\left| p \right|}{2\Re (p)}\chi (\Re (p))\max \left(1,\e^{ - \Im (p)\arg z} \right) + \frac{1}{2}\max \left(1,\e^{\Im (p)\left(  \pm \frac{\pi }{2} - \arg z \right)} \right)
\end{align*}
and
\begin{align*}
\left| \Pi _p (z) \right| \le \; & \frac{1}{2}  + \frac{\left| p \right|}{2 {\Re (p)} } \Gamma \left( \frac{\Re (p)}{2} + 1 \right) \mathbf{F}\!\left( \frac{1}{2},\frac{\Re (p)}{2};\frac{\Re (p)}{2} + 1;\sin ^2 (\arg z) \right)\max \left(1,\e^{ - \Im (p)\arg z} \right) \\ & + \frac{\Gamma (\Re (p))}{2\left| \Gamma (p) \right|}
\\  \le \; & \frac{1}{2} + \frac{\left| p \right|}{2\Re (p)}\chi (\Re (p))\max \left(1,\e^{ - \Im (p)\arg z} \right) + \frac{\Gamma (\Re (p))}{2\left| \Gamma (p) \right|},
\end{align*}
for $\frac{\pi }{4} < \pm \arg z \le \frac{\pi }{2}$. Here, $\mathbf{F}$ denotes the regularised hypergeometric function \cite[Eq. \href{http://dlmf.nist.gov/15.2.E2}{(15.2.2)}]{DLMF}, and $\chi(p)$ is defined by
\[
\chi (p) = \sqrt \pi  \frac{\Gamma \left( \frac{p}{2} + 1 \right)}{\Gamma \left( \frac{p + 1}{2} \right)},\quad \Re(p) > 0.
\]
\end{proposition}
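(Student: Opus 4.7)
The starting point is the integral representation
\[
\Pi_p(z) = \frac{1}{\Gamma(p)}\int_0^{+\infty}\frac{s^{p-1}\e^{-s}}{1 + (s/z)^2}\id s,
\]
valid for $|\arg z| < \frac{\pi}{2}$ and $\Re(p) > 0$, and extended to $|\arg z| = \frac{\pi}{2}$ by analytic continuation. For $\frac{\pi}{4} < \pm\arg z \le \frac{\pi}{2}$, direct modulus estimation fails because the pole of the integrand at $s = \mp\im z$ lies close to, or on, the positive real axis. The plan is to deform the contour onto a two-piece path: a quarter-circle arc $s = |z|\e^{\im\theta}$ from $\theta = 0$ to $\theta = \pm\frac{\pi}{2}$, followed by a ray running from $|z|\e^{\pm\frac{\pi}{2}\im}$ to infinity in the direction $\pm\im$. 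When this deformation sweeps past the pole at $s = \mp\im z$, a Plemelj-type indentation by a small semicircle contributes half of the residue.

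This decomposition produces three independent contributions corresponding to the three summands of the claimed bound. The half-residue at $s = \mp\im z$ has modulus exactly $\frac{1}{2}$, giving the constant term. The arc contribution, parametrised by $s = |z|\e^{\im\theta}$ and simplified using $|1 + \e^{2\im(\theta - \arg z)}| = 2|\cos(\theta - \arg z)|$, reduces after an affine change of variable to a multiple of
\[
\frac{1}{\sin^{\Re(p)}(\arg z)}\int_0^{|\arg z|}\sin^{\Re(p) - 1}(\theta)\id\theta,
\]
which the Euler integral representation of ${}_2F_1$ identifies as $\frac{|p|}{2\Re(p)}\Gamma\bigl(\tfrac{\Re(p)}{2}+1\bigr)\mathbf{F}\bigl(\tfrac{1}{2},\tfrac{\Re(p)}{2};\tfrac{\Re(p)}{2}+1;\sin^2(\arg z)\bigr)$; the accompanying factor $\max(1, \e^{-\Im(p)\arg z})$ controls the modulus of $s^{p-1} = |s|^{p-1}\e^{\im(p-1)\theta}$ along the arc. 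The ray contribution, after substituting $s = |z|\e^{\pm\frac{\pi}{2}\im}(1 + \tau)$ with $\tau > 0$, is either estimated via the exponential decay of $\e^{-s}$ along the imaginary direction, yielding $\frac{1}{2}\max(1, \e^{\Im(p)(\pm\pi/2 - \arg z)})$, or, for the alternative form, by the cruder uniform bound $|1 + (s/z)^2|^{-1} \le 1$ on $|s| \ge |z|$ combined with $\int_0^{+\infty}s^{\Re(p)-1}\e^{-s}\id s = \Gamma(\Re(p))$, producing $\frac{\Gamma(\Re(p))}{2|\Gamma(p)|}$.

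The passage from the hypergeometric expression to $\chi(\Re(p))/\Re(p)$ is routine: the identity $\chi(p) = p\int_0^{\pi/2}\sin^{p-1}(\theta)\id\theta$ follows from the beta integral, and the monotonicity inequality
\[
\frac{1}{\sin^{p}\phi}\int_0^{\phi}\sin^{p-1}(\theta)\id\theta \le \int_0^{\pi/2}\sin^{p-1}(\theta)\id\theta \quad (0 < \phi \le \tfrac{\pi}{2})
\]
is verified by showing that the left-hand side is a non-decreasing function of $\phi$ on this interval.

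The main obstacle lies in the book-keeping around $\arg z = \pm\frac{\pi}{2}$: the sign of the half-residue, the orientation of the arc, and the direction of the ray all depend on whether $\arg z$ is positive or negative, and this is exactly what the $\pm$ in the bound records. Establishing the identity cleanly requires working first in the open sector $\frac{\pi}{4} < |\arg z| < \frac{\pi}{2}$, where every integral involved converges absolutely and the residue book-keeping is unambiguous, and then extending to the closed sector by continuity of $\Pi_p(z)$ along the boundary, appealing to the analytic continuation provided earlier in the paper.
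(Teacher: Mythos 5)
First, a remark on the comparison you were asked to survive: the paper contains no proof of this proposition at all. Appendix B explicitly defers the proofs of every terminant bound to \cite{Nemes2017a} and \cite{Nemes2017b}, so your argument can only be judged on its own terms. On those terms, the closing computations are correct: with $\mu=\Re(p)$, the Euler integral does give $\Gamma\left(\tfrac{\mu}{2}+1\right)\mathbf{F}\left(\tfrac12,\tfrac{\mu}{2};\tfrac{\mu}{2}+1;\sin^2\phi\right)=\mu\sin^{-\mu}(\phi)\int_0^{\phi}\sin^{\mu-1}(\theta)\id\theta$, the beta integral gives $\chi(\mu)=\mu\int_0^{\pi/2}\sin^{\mu-1}(\theta)\id\theta$, and your monotonicity inequality follows from integrating $\frac{\d}{\d\theta}\sin^{\mu}\theta=\mu\sin^{\mu-1}\theta\cos\theta\ge\mu\cos\phi\,\sin^{\mu-1}\theta$ over $[0,\phi]$. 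The gap is in everything that precedes this, i.e.\ in the contour deformation that is supposed to produce the three terms of the bound.

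Concretely: (1) the half-residue does not have modulus $\tfrac12$. Since $1+(s/z)^2=(s-\im z)(s+\im z)/z^2$, the residue of $s^{p-1}\e^{-s}/\bigl(\Gamma(p)(1+(s/z)^2)\bigr)$ at $s=-\im z$ is $-(-\im z)^{p}\e^{\im z}/(2\Gamma(p))$, so the half-residue has modulus $\tfrac{\pi}{2}\,|z|^{\Re(p)}\e^{-|z|\sin(\arg z)}\e^{\Im(p)(\pi/2-\arg z)}/|\Gamma(p)|$, which depends on $|z|$, $\arg z$ and $p$; it is not a universal $\tfrac12$. (2) The ray estimate fails outright: on a ray in the direction $\pm\im$ one has $|\e^{-s}|=\e^{-\Re(s)}=1$, so there is no exponential decay, $\int|s|^{\Re(p)-1}|\e^{-s}|\,|\id s|$ along that ray diverges, and the alternative route via $\int_0^{+\infty}s^{\Re(p)-1}\e^{-s}\id s=\Gamma(\Re(p))$ is only available when the integration variable is real and positive. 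Neither $\tfrac12\max\bigl(1,\e^{\Im(p)(\pm\pi/2-\arg z)}\bigr)$ nor $\Gamma(\Re(p))/(2|\Gamma(p)|)$ can be extracted this way. (3) The geometry of the deformation is also off: for $\arg z>0$ the dangerous pole $s=-\im z$ has argument $\arg z-\tfrac{\pi}{2}\le 0$ and modulus exactly $|z|$, so it lies in the closed lower half-plane \emph{on} your circle of radius $|z|$; the upward sweep $\theta:0\to+\tfrac{\pi}{2}$ never meets it, while the downward sweep puts it on the arc itself, turning the arc integral into a principal value that cannot be bounded by moving absolute values inside the integral. Moreover the arc integrand carries the factor $|z|^{\Re(p)}\e^{-|z|\cos\theta}$, so no affine substitution can turn it into the $|z|$-independent quantity $\sin^{-\Re(p)}(\arg z)\int_0^{|\arg z|}\sin^{\Re(p)-1}(\theta)\id\theta$. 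To repair the argument you would need a genuinely different starting representation (for instance the partial-fraction splitting $\bigl(1+(s/z)^2\bigr)^{-1}=\tfrac12(1-\im s/z)^{-1}+\tfrac12(1+\im s/z)^{-1}$, treating the two poles asymmetrically), as is done in the cited references; as written, none of the three terms of the stated bound is actually established.
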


\begin{proposition} For any complex $p$ with $\Re(p) > 0$, we have the following estimates:
\begin{gather}\label{eq105}
\begin{split}
\left| \Pi _p (z) \right| & \le \e^{\Im (p)\left(  \pm \frac{\pi }{2} - \arg z \right)} \frac{\Gamma (\Re (p))}{\left| \Gamma (p) \right|}\frac{\sqrt {2\pi \Re (p)} }{2\left| \sin (\arg z) \right|^{\Re (p)} } + \left|\Pi _p (z\e^{ \mp \pi \im} )\right| \\ & \le \e^{\Im (p)\left(  \pm \frac{\pi }{2} - \arg z \right)} \frac{\Gamma (\Re (p))}{\left| \Gamma (p) \right|}\frac{\chi (\Re (p))}{\left| \sin (\arg z) \right|^{\Re (p)} } + \left|\Pi _p (z\e^{ \mp \pi \im} )\right|,
\end{split}
\end{gather}
for $\frac{\pi}{2}<\pm \arg z <\pi$.
\end{proposition}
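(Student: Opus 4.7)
The plan is to derive an exact connection formula that expresses $\Pi_p(z)$, for $\frac{\pi}{2} < \pm\arg z < \pi$, in terms of $\Pi_p(z\e^{\mp\pi\im})$ (whose argument lies in $(-\frac{\pi}{2}, \frac{\pi}{2})$, where Proposition \ref{propt1} already applies) plus a single closed-form correction term, and then to bound that correction term sharply.

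To obtain the connection formula, I would first establish the auxiliary identities
\begin{align*}
\e^{\frac{\pi}{2}\im a}\,\e^{-\im z}\,\Gamma\!\left(a, z\e^{-\frac{\pi}{2}\im}\right) &= \im\,\f(a,z) + \g(a,z), \\
\e^{-\frac{\pi}{2}\im a}\,\e^{\im z}\,\Gamma\!\left(a, z\e^{\frac{\pi}{2}\im}\right) &= -\im\,\f(a,z) + \g(a,z),
\end{align*}
by solving the defining relations \eqref{fdef}--\eqref{gdef} for the two incomplete gamma terms. Substituting $z\mapsto z\e^{\pi\im}$ into \eqref{fdef} produces $\Gamma$-values at $z\e^{\pi\im/2}$ and $z\e^{3\pi\im/2}$; applying $\Gamma(a, z\e^{2\pi\im}) = \e^{2\pi\im a}\Gamma(a,z) + (1 - \e^{2\pi\im a})\Gamma(a)$ brings the latter back to the principal branch, and then eliminating the resulting $\Gamma$-terms via the identities above yields, after a brief simplification (using $\e^{2\pi\im a} - 1 = 2\im\,\e^{\pi\im a}\sin(\pi a)$),
\[
\f(a, z\e^{\pi\im}) = -\e^{\pi\im a}\,\f(a,z) + \e^{\frac{\pi}{2}\im a}\sin(\pi a)\,\e^{-\im z}\,\Gamma(a).
\]
Setting $a = 1-p$, multiplying through by $(z\e^{\pi\im})^p$, and invoking the reflection formula $\sin(\pi p)\Gamma(1-p) = \pi/\Gamma(p)$ then gives
\[
\Pi_p(z) = \Pi_p\!\left(z\e^{\mp\pi\im}\right) \pm \frac{\pi\im\,\e^{\mp\pi\im p/2}}{\Gamma(p)}\,z^p\,\e^{\pm\im z},
\]
with both sign choices obtained analogously.

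Next I would take absolute values. Using $|\e^{\mp\pi\im p/2}| = \e^{\pm\pi\Im(p)/2}$, $|z^p| = |z|^{\Re(p)}\e^{-(\arg z)\Im(p)}$, and $|\e^{\pm\im z}| = \e^{\mp|z|\sin(\arg z)}$, the correction term has modulus
\[
\frac{\pi}{|\Gamma(p)|}\,\e^{\Im(p)(\pm\pi/2 - \arg z)}\,|z|^{\Re(p)}\,\e^{-|z|\,|\sin(\arg z)|}
\]
in the regime $\frac{\pi}{2} < \pm\arg z < \pi$, where $|\sin(\arg z)| > 0$. The elementary maximisation $\max_{|z|>0} |z|^{\Re(p)}\e^{-|z|\,|\sin(\arg z)|} = \Re(p)^{\Re(p)}\e^{-\Re(p)}/|\sin(\arg z)|^{\Re(p)}$, attained at $|z| = \Re(p)/|\sin(\arg z)|$, combined with the Stirling bound $x^x\e^{-x} \le \Gamma(x)\sqrt{x/(2\pi)}$ (equivalent to $\Gamma(x) \ge \sqrt{2\pi/x}\,(x/\e)^x$), produces the first line of \eqref{eq105}. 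The second line then follows from the first via $\chi(\Re(p)) \ge \sqrt{2\pi\Re(p)}/2$, which is Gautschi's inequality $\Gamma(x+1)/\Gamma(x+\tfrac{1}{2}) \ge \sqrt{x}$ applied at $x = \Re(p)/2$.

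The principal obstacle is the careful bookkeeping of phases in the derivation of the connection formula: the rotations of $\Gamma(a,\cdot)$, the interconversions between $\{\f, \g\}$ and $\{\Gamma(a, z\e^{\pm\pi\im/2})\}$, and the collection of the exponential factors $\e^{\pm\pi\im a}$ must combine without sign or phase error, and the final collapse rests on the precise cancellation that reduces $-\e^{\pi\im a}\Pi_p(z\e^{-\pi\im})\cdot(\text{stuff})$ to just $\Pi_p(z)$. Once that identity is in place cleanly, the remaining maximisation and Stirling manipulations are entirely routine.
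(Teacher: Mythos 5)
Your argument is correct: the connection formula $\Pi_p(z) = \Pi_p(z\e^{\mp\pi\im}) \pm \frac{\pi\im\,\e^{\mp\pi\im p/2}}{\Gamma(p)}z^p\e^{\pm\im z}$ checks out (I verified the phase bookkeeping via $\f(a,z\e^{\pi\im}) = -\e^{\pi\im a}\f(a,z) + \e^{\frac{\pi}{2}\im a}\sin(\pi a)\e^{-\im z}\Gamma(a)$ and the reflection formula), and the maximisation over $|z|$ together with the Stirling lower bound $\Gamma(x)\ge\sqrt{2\pi}\,x^{x-1/2}\e^{-x}$ and $\chi(\Re(p))\ge\tfrac{1}{2}\sqrt{2\pi\Re(p)}$ yields exactly \eqref{eq105}. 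The paper itself gives no proof of this proposition, deferring to \cite{Nemes2017a} and \cite{Nemes2017b}, where the argument is essentially the one you reconstructed, so no further comparison is needed.
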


The dependence on $|z|$ in the estimates given in \eqref{eq105} can be removed by using the bounds for $\left|\Pi_p(z \e^{\mp \pi \im})\right|$ provided earlier.

Finally, we note the following two-sided inequality, proved by Watson \cite{Watson1959}, for positive real values of $p$:
\[
\sqrt{\frac{\pi}{2} \left( p + \frac{1}{2} \right)} < \chi(p) < \sqrt{\frac{\pi}{2} \left( p + \frac{2}{\pi} \right)}.
\]
The upper bound can be used to simplify the error estimates involving $\chi(p)$.

\end{document}